\newtheorem{theorem}{Theorem}[section]
\newtheorem{lemma}[theorem]{Lemma}
\newtheorem{prop}[theorem]{Proposition}
\newtheorem{example}[theorem]{Example}
\newtheorem{remark}[theorem]{Remark}
\newtheorem{corollary}[theorem]{Corollary}
\newtheorem{definition}[theorem]{Definition}
\numberwithin{figure}{section}
\numberwithin{equation}{section}
\newcommand{\diamp}{\mathrm{diam}_{p}}
\newcommand{\diamr}{\mathrm{diam}_{0}}
\newcommand{\dimp}{\mathrm{dim}_{\mathrm{H},p}}
\newcommand{\dimr}{\mathrm{dim}_{\mathrm{H},0}}
\newcommand{\dimlc}{\mathrm{dim}_{\mathrm{loc}}}
\newcommand{\StandardClause}{Let $\F = \{F_i\}_{i=1}^n$ be a $p$-adic iterated function system, where each $F_i(x) = (-1)^{b_i} p^{k_i} x + d_i$, with $b_i \in \{0,1\}$, $k_i \in \Z$, $k_i \geq 1$, and $d_i \in \Z_p \cap \Q$.
Let $K$ be the $p$-adic self-similar set associated to $\F$.
} 
\def\Z{{\mathbb Z}}
\def\R{{\mathbb R}}
\def\Q{{\mathbb Q}}
\def\N{{\mathbb N}}
\def\D{{\mathcal D}}
\def\A{{\mathcal A}}
\def\Ad{{\overline{\mathcal A}}}
\def\F{{\mathcal F}}
\def\Ld{{\overline{\mathcal L}}}
\def\Ln{{\mathcal L}}
\begin{document}

\title{Self-similar sets and self-similar measures in the $p$-adics.}
\author{Kevin G. Hare}
\address{Department of Pure Mathematics \\
          University of Waterloo \\
          Waterloo, Canada}
\email{kghare@uwaterloo.ca}
\thanks{Research of K.G. Hare was supported, in part, by NSERC Grant 2019-03930}
\author{Tom\'a\u{s} V\'avra}
\address{Charles University, Faculty of Mathematics and Physics, Department of Algebra, Sokolovská 83, 18600 Praha 8, Czech Republic}
\address{Department of Pure Mathematics \\
          University of Waterloo \\
          Waterloo, Canada}
\email{tvavra@uwaterloo.ca, vavrato@gmail.com}
\thanks{Research of T. V\'avra was supported, in part, by the University of Waterloo and NSERC Grant 2019-03930}

\maketitle

\begin{abstract}
In this paper we investigate $p$-adic self-similar sets and $p$-adic self-similar measures.
We show that $p$-adic self-similar sets are $p$-adic path set fractals, and that the converse is not necessarily true.
For $p$-adic self-similar sets and $p$-adic self-similar measures, we show the existence of a unique essential class.
We show that, under mild assumptions, the decimation of $p$-adic 
    self-similar sets is maximal.
For $p$-adic self-similar measures, we show that many results involving
    local dimension are similar to those of their real counterparts,
    with fewer complications.
Most of these results use the additional structure of self-similarity,
    and are not true in general for $p$-adic path set fractals.
\end{abstract}

\section{Introduction}

Let $\Q_p$ be the $p$-adic numbers.
We say that $F(x) = a x + d$ with $F: \Q_p \to \Q_p$ is a linear contraction if $|F(x) - F(y)|_p < |x-y|_p$ for all $x \neq y$.
Let $\F = \{F_1, F_2, \dots, F_n\}$ be a finite set of linear 
    contractions.
We say that $\F$ is a {\em $p$-adic iterated function system}.
There exists a unique non-empty compact set $K \subseteq \Q_p$, called the {\em attractor} or {\em $p$-adic self-similar set} such
    that \[ K = \bigcup_{i} F_i(K).\]

In this paper we will focus on the case where the 
    linear contractions are of the form $F(x) = \pm p^k x + d$ for 
    some $k \geq 1$ and $d \in \Z_p \cap \Q$.

These object have been studied in \cite{JKLW, KRC, LH11, LHvF, LH09, LiQiu}.
Abram, Lagarias and Slonim, in a series of papers, explored $p$-adic path set fractals, \cite{AbramLagarias14a, AbramLagarias14b, AbramLagariasSlonim21}.  
Consider an automaton given by a directed graph $G$ with vertices $v_1, \dots, v_n$.
To each edge of this graph we associate an output from $\{0, 1, \dots, p-1\}$.
Then the set of $p$-adic numbers associated to this directed graph from a starting vertex $v_1$ have $p$-adic representations given by the set of infinite words accepted by this  automaton.  
This set of these $p$-adic numbers is called a {\em $p$-adic path set fractal}.

In this paper we will study the relationship between $p$-adic path set fractals and $p$-adic self-similar sets.
Notation used through this paper is introduced in Section \ref{sec:notation}.
In Section \ref{sec:self-similar are path set} we will show that, under reasonable conditions, that $p$-adic self-similar sets are $p$-adic path set fractals.
We will show that the set of $p$-adic path set fractals is strictly larger than the set of $p$-adic self-similar sets.
In Section \ref{sec:full} we consider the structure of a $p$-adic path set fractal with positive Haar measure.
We show in Section \ref{sec:essential class} that, under reasonable conditions, that a $p$-adic self-similar set always has a unique essential class. 
This is not true in general for $p$-adic path set fractals.
In Section \ref{sec:decimation} we discuss decimation for $p$-adic self-similar sets.
The study of $p$-adic self-similar measures and local dimensions is given in Section \ref{sec:measures}, and is analogous to self-similar measures and local dimensions on $\R$.
Finally, in Section \ref{sec:conclusions} we make some final comments and raise some open questions.

\section{Notation}
\label{sec:notation}
\subsection{\texorpdfstring{$p$}{p}-adic numbers}
Fix a prime number $p$. The $p$-adic valuation $\nu:\Q^*\to\Q$ is defined as $\nu\left(\tfrac ab\right) = r$ where $\tfrac ab = p^r \tfrac{a'}{b'}$ with $p$ being coprime to both $a',b'.$ The $p$-adic absolute value $|\cdot|_p:\Q\to\Q$ is defined as $|x|_p = p^{-\nu(x)}$ with $|0|_p = 0.$ The name absolute value refers to certain axioms being satisfied, most notably $|xy|_p = |x|_p|y|_p$ and the (strong) triangle inequality $|x+y|_p \leq \max\{|x|_p,|y|_p\}.$  Moreover, we have that $|x+y|_p = \max\{|x|_p,|y|_p\}$ whenever $|x|_p\neq |y|_p.$ The $p$-adic absolute value is an ultra-metric inducing a topology on $\Q.$ The topological closure of $\Q$ with respect to $|\cdot|_p$ is called the field of $p$-adic numbers, denoted by $\Q_p.$ 

The standard way of expressing $p$-adic numbers is through their $p$-adic expansion. It is an expression of the form $\sum_{i=k}^\infty a_ip^i$ where $k\in\Z$ and $a_i\in\{0,\dots,p-1\}$ for all $i\geq k.$ A $p$-adic expansion is eventually periodic if and only if the expanded number is rational, in which case the value can be computed through the closed formula for the sum of geometric series. The set of $p$-adic integers is 
$$\Z_p = \left\{\sum_{i\geq 0}a_ip^i : a_i\in\{0,\dots,p-1\}\right\}.$$
The integers $\Z$ lie in $\Z_p$, with non-negative integers having finite representations, and negative integers having eventually periodic expansions.

See \cite{MR0169847, MR1488696, MR754003} for a more complete introduction to $p$-adic numbers.

\subsection{Automata theory}
A finite automaton $\A$ on the finite set of symbols $A$ is given by a finite set of states $Q$, by transitions $E\subseteq Q\times A\times Q,$ and by an initial state $\mathbf{i}\in Q$.  In this paper, restrict our attention to finite automaton where all states are accepting states.  We denote the set of all finite words over an alphabet $A$ as $A^*$.  A finite word $s=a_1a_2\dots a_n\in A^*$ is said to be accepted by $\A$ if there exist transitions $(\mathbf{i},a_1,q_1),(q_1,a_2,q_2),\dots,(q_{n-1},a_{n},q_n)$ all belonging to $E.$ A subset of $L\subseteq A^*$ (usually called a language) is said to be recognized by a finite automaton $\A$ if $\A$ accepts precisely the elements of $L.$ A \emph{deterministic finite automaton} (DFA) has the property that for every $q\in Q$ and $a\in A,$ there is at most one $q'\in Q$, such that $(q,a,q')\in E.$ A {\em non-deterministic finite automaton} (NDFA) is a finite automaton that is not deterministic.  It is a classical result that if a language is recognized by a non-deterministic finite automata, then there exists a deterministic finite automata that also recognizes this language. We can associate to a finite automata a finite graph on vertices $Q$. For each $(q, a, q') \in E$ we associate a directed edges $(q, q')$ labeled by $a$. The automaton is then deterministic if for each vertex $q$ and each $a\in A,$ at most one edge labeled $a$ leaves $q.$

An infinite word $a_1a_2\dots$ over $A$ is said to be accepted by $\A$ if all prefixes $a_1\dots a_n$ are accepted by $\A$. We can see that if a language of infinite words is recognized by a non-deterministic automaton, then it is recognized by a deterministic one. Note that the usual notation of automata for infinite words requires an acceptance condition. They might be for instance that an accepting state is visited infinitely many times (B\"uchi automaton), or that every infinitely often visited state belongs to accepting set (Muller automaton). It is interesting that in the former case (but not in the latter) the distinction between deterministic and not-deterministic matters. Thus, the case we consider may be seen as either a B\"uchi or a Muller automaton with all states being accepting.

A deterministic finite automaton can be constructed from a non-deterministic finite automaton in the following way. Let $Q$ be the set of states of the NDFA.  The set of states of the DFA, which we will denote $\overline Q$, are subsets of $Q$, and the initial state is $\{\mathbf{i}\}$. For a letter $a\in A,$ the transition from a state $\overline q_1 \in \overline Q$ labeled by $a$ is the state $\overline q_2 = \bigcup_{q\in \overline q_1} \{q'\in Q : (q,a,q')\in E)\}$. In other words, for any path $a_1a_2\dots a_n$, the state reached by this path from $\{\mathbf{i}\}$ is all the states of $Q$ that can be reached by this path in the non-deterministic version. It is not hard to see that the new automaton recognizes the same language.

A finite state transducer is defined similarly to a finite state automaton.  
The key difference is that, in additional to an alphabet $A$ there is an output alphabet
    $B$.
The transitions $E \subseteq Q \times A \times Q \times B$.
For a four tuple $(q_1, a, q_2, b)$ we interpret $q_1$ as the start state of a transition,
    $q_2$ as an end state of a transition.
We interpret $a$ as the input alphabet and $b$ as the output alphabet.
This allows us to ``read in'' an infinite word accepted by $\A$ and output
    an infinite word in $B^{\N}$.

For a more complete introduction to automata theory with infinite strings, see \cite{PerrinPin}.

\subsection{Non-negative matrices}
For a DFA $\A$ with $n$ states, we define its adjacency matrix $T=T_{i,j}$ for $0\leq i,j\leq n$, where $T_{i,j}$ is the number of transitions from the state $q_i$ to $q_j.$ Defined this way, the $i,j$-th component of $T^k$ is the number of different words $s$ of length $k$ from $q_i$ to $q_j$ labeled by $s$. We immediately get that $T$ is non-negative. If the graph of $\A$ is strongly connected, we see that $T$ is irreducible. If $\A$ is not irreducible we can permute $T$ into a block triangular shape, where the diagonal blocks correspond to strongly connected components of $\A$.

The dominate eigenvalue value of a matrix $M$ is known as the {\em spectral radius} and is denoted $\rho(M)$.
For $M_1$ and $M_2$ both $n \times n$ matrices, we have that if $0\leq M_1 < M_2,$ then there is an inequality of the spectral radii $\rho(M_1)\leq \rho(M_2),$ and in the case that $M_2$ is irreducible, we have $\rho(M_1)<\rho(M_2).$ We will use this fact later.

\subsection{Hausdorff dimension}

For any subset $X \subseteq \Q_p$ we define the diameter of $X$ as 
    \[ \diamp(X) := \sup_{x,y \in X} |x-y|_p .\]
For any $\delta > 0$ and any $d \geq 0$ we define 
\begin{equation*}
H_{\delta,p}^d(X) := \inf \left\{ \sum_{i=1}^\infty \diamp(X_i)^d: X \subseteq \bigcup X_i, \diamp(X_i) < \delta\right\}. 
\end{equation*}
We define the {\em outer measure} $H_{p}^d(X)$ as \[ H_{p}^d(X) = \lim_{\delta \to 0} H_{\delta,p}^d(X) .\]

The function $H_{p}^d(X)$ is decreasing in $d$.
For most $d$ the value of $H_{p}^d(X)$ is either $0$ or infinity.  In fact there is at most
    one value of $d$ where it can have a non-zero finite value. 
    
For $X \subset \Q_p$, we define the {\em Hausdorff dimension}, 
    $\dimp(X) := \inf \{d \geq 0: H_{p}^d(X) = 0\} = \sup \{d \geq 0: H_{p}^d(X) = \infty \}$.
For any $Y\subseteq K$ we define the {\em Hausdorff measure} $\mu_{K, p}$ with respect to $K$ as $\mu_{K,p}(Y)=H_p^{\dimp(K)}(Y)$.
We will show that when $K$ is a $p$-adic self-similar set that $0 < \mu_{K,p}(K) < \infty$. For $K \subset \R$, we define $\diamr, H_{\delta,0}^d, H_{0}^d, \dimr$ and $\mu_{K,0}$ in an analogous way.
See \cite{AbramLagarias14a, KRC} for further details.

\section{Path set fractals and \texorpdfstring{$p$}{p}-adic self-similar sets}
\label{sec:self-similar are path set}

\StandardClause
For each $x \in K$ there exists an infinite sequence $a_0, a_1, \dots$ such that $x = \lim_{k \to \infty} F_{a_1} \circ F_{a_2} \circ \dots \circ F_{a_k}(0)$.
We say that $\sigma$ is an address of $x$. 
It is worth noting that $x$ may have more than one address.
It is clear that 
\begin{equation}\label{expression} x = \sum_{j=1}^\infty (-1)^{\sum_{i=1}^{j-1} b_{a_i}} p^{\sum_{i=1}^{j-1} k_{a_i}} d_{a_j}. \end{equation}
As such, we have:
\begin{lemma}\label{lem:basic}
The points of $K$ are precisely 
\begin{equation} \left\{ \sum_{j=1}^\infty (-1)^{\sum_{i=1}^{j-1} b_{a_i}} p^{\sum_{i=1}^{j-1} k_{a_i}} d_{a_j}: (a_j) \in \{1,2,\dots, n\}^{\N} \right\}.\label{eq:K} \end{equation}
\end{lemma}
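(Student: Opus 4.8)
The plan is to show that the set in~\eqref{eq:K} equals $K$ by a two-sided inclusion, using the fact that $K$ is the unique nonempty compact set satisfying $K = \bigcup_i F_i(K)$. Let me denote by $S$ the set described in~\eqref{eq:K}. The key observation driving everything is that, because each $F_i$ is a contraction in the ultra-metric $|\cdot|_p$, composing maps $F_{a_1}\circ\cdots\circ F_{a_k}$ shrinks diameters geometrically: by the explicit form $F_i(x) = (-1)^{b_i}p^{k_i}x + d_i$ with $k_i\geq 1$, the linear coefficient of the composite has $p$-adic absolute value $p^{-\sum_{i=1}^k k_{a_i}} \leq p^{-k}\to 0$. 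This guarantees that the limit $\lim_{k\to\infty}F_{a_1}\circ\cdots\circ F_{a_k}(0)$ exists (the partial compositions form a Cauchy sequence in the complete field $\Q_p$), so the expression in~\eqref{expression} is well-defined, and the value is independent of the choice of the ``seed'' point $0$; I would note this independence explicitly since it is what makes the address well-defined as a limit rather than tied to the starting point.

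First I would establish $S\subseteq K$. Given an address $(a_j)$, each truncation $F_{a_1}\circ\cdots\circ F_{a_k}(0)$ maps the compact set $K$ into itself, and since $0$ can be replaced by any point of $K$ without changing the limit, the limit point lies in the closed set $K$. Concretely, pick any $z\in K$; then $F_{a_1}\circ\cdots\circ F_{a_k}(z)\in K$ for every $k$, and $|F_{a_1}\circ\cdots\circ F_{a_k}(z) - F_{a_1}\circ\cdots\circ F_{a_k}(0)|_p \leq p^{-k}|z|_p\to 0$, so the limit agrees with~\eqref{expression} and lies in $K$ by compactness (hence closedness) of $K$.

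For the reverse inclusion $K\subseteq S$, I would verify that $S$ is itself nonempty and compact and satisfies the self-similarity relation $S=\bigcup_i F_i(S)$, and then invoke uniqueness of the attractor to conclude $S=K$. Nonemptiness is immediate. For the fixed-point relation, the identity $F_i\left(\sum_{j\geq 1}(-1)^{\sum_{\ell<j}b_{a_\ell}}p^{\sum_{\ell<j}k_{a_\ell}}d_{a_j}\right)$ equals the sum with $a_i$ prepended as the new first index, which is exactly the shift operation on addresses; this gives both inclusions of $S=\bigcup_i F_i(S)$ by reading the prepend/strip-first-symbol correspondence in both directions. The remaining point is compactness of $S$: I would exhibit $S$ as the continuous image of the compact product space $\{1,\dots,n\}^{\N}$ under the coding map $(a_j)\mapsto\eqref{expression}$, where continuity follows from the uniform geometric contraction estimate above, so $S$ is compact as a continuous image of a compact space.

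The main obstacle I anticipate is the compactness argument for $S$, specifically verifying that the coding map is continuous (equivalently, that the series converges uniformly over all addresses) and that its image is closed; once $S$ is known to be a nonempty compact solution of the self-similarity equation, uniqueness of the attractor $K$ finishes the proof immediately. The uniform convergence is in fact clean in the $p$-adic setting precisely because the contraction factor $p^{-\sum k_{a_i}}\leq p^{-k}$ is uniform in the address, a simplification over the real case, so I expect this step to go through without the metric subtleties one would face over $\R$.
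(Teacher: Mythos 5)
Your proof is correct and takes essentially the same route as the paper, whose one-line proof is precisely your key step: verifying that the set in \eqref{eq:K} satisfies the self-similarity relation and invoking uniqueness of the attractor. You merely spell out the details the paper leaves implicit (uniform convergence of the coding map via the bound $p^{-\sum k_{a_i}}\leq p^{-k}$, compactness of the image of $\{1,\dots,n\}^{\N}$, and the prepend/shift correspondence), all of which are sound.
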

The proof consists only of the verification that $K=\bigcup F_i(K).$
It is worth noting that the above sums are $p$-adic expansions
    only if $k_i = 1$, $d_i \in \{0,1, \dots, p-1\}$ and $b_i = 0$ for all $i$.
Otherwise, the above sums are well defined, but will need some rewriting
 to give a $p$-adic number in the standard form.

Consider an automaton given by a directed graph $G$ with vertices $v_1, \dots, v_n$.
To each edge of this graph we associate an output from $\{0, 1, \dots, p-1\}$.
Then the set of $p$-adic numbers associated to this directed graph from a starting vertex $v_1$ are the set of infinite words accepted by this  automaton.  
These fractals are known as {\em $p$-adic path set fractals}, and were 
    first explored in 
 \cite{AbramLagarias14a, AbramLagarias14b, AbramLagariasSlonim21}.  
 It is shown that the Hausdorff dimension of $p$-adic path set fractals exists, and is equal to $\frac{\log(\lambda)}{\log(p)}$, where $\lambda$ is the dominant eigenvalue of the adjacency matrix of the $p$-adic path set fractal.

We first give a simple example when the $b_i =0$ and $k_i = 1$, to show how to rewrite points in $K$ from the form \eqref{eq:K} into a standard 
   $p$-adic representation.
This example will also be shown to be a $p$-adic path set fractal.
    
\begin{example} \label{ex:3adic}
Consider $p=3$ and the $p$-adic IFS $\{A, B, C\}$ with
    $A(x) = 3 x + 0, B(x) = 3 x + 1$ and $C(x) = 3 x + 3$.
We have that $K = \left\{\sum 3^i d_i: d_i \in \{0,1,3\} \right\}$. We can start rewriting $\sum_{i\geq 0} 3^id_i$ from the least significant digits $d_0,d_1,d_2,\dots$. We note that the digit $3$ is not an allowable digit in a $3$-adic representation of a number. Because $3p^k + d_{k+1}p^{k+1} = (1+d_{k+1})p^{k+1},$ instead of the digit $3$ we output $0$ at the position of $p^k$, and remember the carry of $1$. If $d_{k+1}\in\{0,1\},$ we can resolve the carry. In case $d_{k+1}=3,$ the carry propagates further, as we use $4p^{k+1} = p^{k+1} + p^{k+2}$ where $p^{k+2}$ is the new carry.
This procedure is visualized in Figure \ref{fig:013a}.

\begin{figure}
\centering
\begin{subfigure}[t]{0.4\textwidth}
\begin{tikzpicture}[->,>=stealth',shorten >=1pt,auto,node distance=2.8cm,
                    semithick]

  \node[initial,state] (A)                    {$0$};
  \node[state]         (B) [below of=A] {$1$};

  \path (A) edge [bend right=50]  node[left] {C/0} (B)
            edge [loop above] node {A/0} (A)
            edge [loop right] node {B/1} (A)

        (B) edge [bend right=20] node {A/1} (A)
            edge [bend right=50] node[right] {B/2} (A)
            edge [loop right] node {C/1} (B);
\end{tikzpicture}
\caption{Transducer} \label{fig:013a}
\end{subfigure}
\begin{subfigure}[t]{0.35 \textwidth}
\begin{tikzpicture}[->,>=stealth',shorten >=1pt,auto,node distance=1.8cm,
                    semithick]

  \node[initial,state] (A)                    {$\{0\}$};
  \node[state]         (B) [below of=A] {$\{0,1\}$};

  \path (A) edge [bend right=50]  node[left] {0} (B)
            edge [loop above] node {1} (A)

        (B) edge [bend right=50] node[right] {2} (A)
            edge [loop right] node {0} (B)
            edge [loop below] node {1} (B);

\end{tikzpicture}
\caption{DFA} \label{fig:013b}
\end{subfigure}
\caption{Transducer and DFA for Example \ref{ex:3adic}} \label{fig:013}
\end{figure}
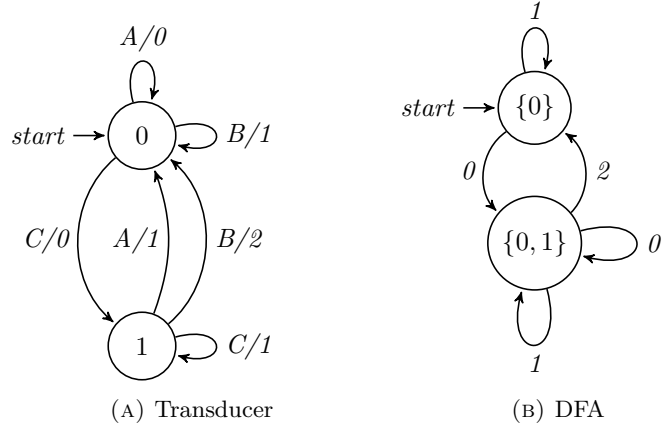

Each edge of the transducer associated to the $p$-adic self-similar set has both
    and input and and output symbol.
To describe the attractor as a $p$-adic path set fractal, we associate to each edge the output symbol only.
From this, we will obtain a non-deterministic finite state machine recognizing the $p$-adic expansions of elements of $K$.  This is visualized in Figure \ref{fig:013b}.
\end{example}

The idea from the previous example is the essence behind the following theorem.

\begin{theorem}\label{thm:main}
\StandardClause
Then the rewriting from elements given in equation \eqref{eq:K} to standard $p$-adic expansions $\sum a_i p^i$ with $a_i \in \{0,\dots,p-1\}$ is realized by a finite state transducer.

In particular, the elements of $K$ are recognized by a finite state automaton, thus the Hausdorff dimension of $K$ is computable.
\end{theorem}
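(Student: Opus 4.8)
The plan is to construct the transducer explicitly by tracking the ``carry'' that arises when rewriting the sums in \eqref{eq:K} into standard form, and then to argue that only finitely many carry values can occur. Each element of $K$ is built by reading an address $(a_j) \in \{1,\dots,n\}^{\N}$ and accumulating the partial sums $s_J = \sum_{j=1}^{J} (-1)^{\sum_{i<j} b_{a_i}} p^{\sum_{i<j} k_{a_i}} d_{a_j}$. The natural state of the machine after reading $a_1,\dots,a_J$ should record two things: the accumulated sign $\varepsilon_J = (-1)^{\sum_{i\le J} b_{a_i}} \in \{\pm 1\}$, and the ``carry'' $c_J$, namely the portion of the partial sum at positions that have not yet been output as stabilized $p$-adic digits. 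First I would make precise what the carry is: after the scaling $p^{\sum_{i\le J} k_{a_i}}$ has been applied, the low-order digits of $s_J$ are fixed forever (because every future term is divisible by a strictly higher power of $p$), so the only data that can still influence future digits is a bounded-denominator rational living in a fixed range.

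The key step is the boundedness of the carry state space. Because each $d_i \in \Z_p \cap \Q$ and $k_i \ge 1$, the carry $c_J$ is a rational number whose denominator divides a fixed integer (a common denominator of the $d_i$), and whose $p$-adic size and archimedean size are both controlled: the new contribution at each step is one of finitely many values $\pm d_i$, and it gets added to $\varepsilon_{J-1} c_{J-1}$ after shifting. I would show that the set of reachable pairs $(\varepsilon_J, c_J)$ is finite by bounding $|c_J|_p$ above (the carry cannot be $p$-adically large, since it represents undischarged high-order information) and bounding its archimedean height below and above (the denominators are fixed and the numerators lie in a bounded interval once the sign is factored out). This finiteness is exactly the condition $d_i \in \Z_p \cap \Q$ buys us: a general $d_i \in \Z_p$ could produce infinitely many carries. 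The transducer then has states indexed by these finitely many carry-sign pairs, input alphabet $\{1,\dots,n\}$, and on reading $a_j$ it emits the newly stabilized $p$-adic digit(s) at the current scale and updates $(\varepsilon,c)$ deterministically.

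The hard part will be handling the interaction of the sign flips $b_i$ with the carry propagation, since negation in $\Z_p$ turns a digit $a$ into $p-1-a$ with a borrow, so the rewriting of negative contributions is not a purely local operation on digits. I would deal with this by keeping the sign \emph{inside} the state rather than trying to pre-compute signed digits: the emitted digit and the carry update are computed from the signed quantity $\varepsilon_{J-1} d_{a_J}$ reduced modulo the appropriate power of $p$, so the negation is absorbed into the arithmetic of the carry rather than creating an unbounded backlog. A secondary technical point is that when $k_i > 1$ a single input symbol must cause the transducer to emit several output digits (or, equivalently, we pass through intermediate states that emit one digit each), which is a routine padding argument. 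Once the transducer is built, the second sentence of the theorem follows immediately: projecting each transducer edge to its output symbol yields a non-deterministic finite automaton recognizing the standard $p$-adic expansions of the points of $K$, as illustrated in Example~\ref{ex:3adic}; determinizing it via the subset construction described in Section~\ref{sec:notation} and taking the adjacency matrix $T$, the Hausdorff dimension of $K$ equals $\tfrac{\log \rho(T)}{\log p}$ by the path set fractal dimension formula quoted above, and in particular is computable.
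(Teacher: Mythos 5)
Your proposal matches the paper's proof in all essentials: the paper likewise builds a transducer whose states are the carries $s \in \Q \cap \Z_p$ (with transitions $s \mapsto \tfrac{1}{p^k}(s+d-D_k(s+d))$ outputting the block $D_k(s+d)$), handles $k_i \ge 2$ by emitting length-$k_i$ blocks, absorbs negative contractions by storing the sign parity in the state, and then forgets inputs, determinizes, and reads off the dimension from the spectral radius. Your finiteness argument for the carry set (fixed denominator from $d_i \in \Z_p \cap \Q$ plus bounded archimedean size) is in fact spelled out slightly more carefully than the paper's one-line assertion, but it is the same idea, so no further comparison is needed.
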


\begin{proof}
We will consider three sub-cases, which have subtle differences in how
    they interact with the transducer.
\begin{enumerate}
\item $F_i(x) = p x + d_i$ for $d_i \in \Z_p \cap \Q,$
\label{case:d}
\item $F_i(x) = p^{k_i} x + d_i$ for $k_i \geq 2,$
\label{case:p^2}
\item $F_i(x) = - p^{k_i} x + d_i.$
\label{case:negative}
\end{enumerate}

{\bf Case \eqref{case:d}:}
Assume all maps are of the form $F_i(x) = p x + d_i$ for $d_i \in \Z_p \cap \Q$.
For $x\in\Z_p$, let $D(x)\in\{0,\dots,p-1\}$ be the least significant digit in the $p$-adic expansion of $x.$ 
That is, if $x = \sum_{i=0}^\infty a_i p^i$ then $D(x) = a_0$.
Notice that in this case, the $n$-th digit of the $p$-adic expansion of $\sum_{i\geq 0} p^id_i$ is determined by $d_1,\dots,d_n$ only.

We will now construct a non-deterministic transducer. The initial state is $\mathbf{i}=0\in\Q \cap \Z_p$ and for any input symbol $d\in\D,$ the output symbol is $D(d)$ and the new state is $s' = \tfrac1p(d-D(d)).$ Namely, the new state is the carry, i.e. the value that carries to higher powers of $p.$ In general, the states are a subset of $\Q\cap \Z_p$. From each state $s$, there are $\#\D$ edges. The transition from $s$ while reading $d$ is outputting symbol $D(s+d)$ and changing the state to $s' = \tfrac1p(s+d-D(s+d)).$ With $d$ and $D(x)$ being from a finite set, there are only finitely many states.

{\bf Case \eqref{case:p^2}:}
The above technique needs to be slightly modified when some maps are of the form $F_i(x) = p^{k_i} x + d_i$ for $k_i \geq 2$.
Define $D_k(x) = \{0, 1, \dots, p^{k}-1\}$ to be the least 
    $k$ significant digits of the $p$-adic expansion of $x$.
That is, $D_k(a_0 + a_1 p + \dots) = a_0 + a_1 p + \dots + a_{k-1} p^{k-1}$.
From equation \eqref{expression} we see that the $k_i$-block of $\sum_{i \geq 0} p^{\sum_{j=0}^{i} k_j} d_i$ starting at $\sum^{i-1} k_j + 1$ position is determined by $d_1, d_2, \dots, d_{i}$ only. 

The key difference is that the output symbol is a block of length $k$ given by $D_k(s+d)$ and the new state is defined as $s' = \frac{1}{p^k} (d+s - D_k(d+s))$.
That is, if $D_k(s+d) = a_0 + a_1 p + \dots + a_{k-1} p^{k-1}$ then we output 
    $a_0, a_1, \dots, a_{k-1}$.

{\bf Case \eqref{case:negative}:}
The above technique needs again a modification if some of the maps
     have negative contractions.
In this case, we will keep track in the state if we have had an even or odd number of maps with negative contractions within the first $n$ terms. 
It is sufficient to keep track of the parity of $\sum_{i=1}^{j-1} b_{a_i}$ as can be seen from equation  \eqref{expression}.
If we have had an even number, then things proceed as before.
If we have had an odd number, we must modify our output to be $D_k(-(d+s))$, as well as modifying are new state at
$s' = \frac{1}{p^k} (D_k(d+s) - d - s)$.

As the transducer acts on $\D$, we obtain a non-deterministic automaton by forgetting the input symbols. Moreover, the language of a non-deterministic finite automaton is recognized by a deterministic one.
We can use standard techniques for converting a non-deterministic automaton to a minimal deterministic automaton.  See for example \cite{AlloucheShallit}.
\end{proof}

\begin{corollary}
A $p$-adic self-similar set with contractions of the form $x \to \pm p^k x + b$ for $b \in \Z_p \cap \Q$ is a $p$-adic path set fractal.
\end{corollary}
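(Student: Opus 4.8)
The plan is to deduce the corollary almost immediately from Theorem~\ref{thm:main}, using the definition of a $p$-adic path set fractal and the characterization of $K$ given in Lemma~\ref{lem:basic}. The key observation is that a $p$-adic path set fractal is, by definition, the set of $p$-adic numbers whose standard $p$-adic expansions are the infinite words accepted by some finite automaton. So the entire content of the corollary is the assertion that the set $K$, when described via standard $p$-adic expansions $\sum a_i p^i$ with $a_i \in \{0,\dots,p-1\}$, is recognized by a finite automaton.

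First I would recall that by Lemma~\ref{lem:basic} the points of $K$ are exactly the values of the sums in \eqref{eq:K}, indexed by address sequences $(a_j) \in \{1,\dots,n\}^{\N}$. These sums are generally \emph{not} in standard $p$-adic form. Theorem~\ref{thm:main} provides a finite state transducer that reads an address (equivalently, the formal sum from \eqref{eq:K}) and outputs the standard $p$-adic expansion $\sum a_i p^i$ of the corresponding element of $K$. The next step is to forget the input alphabet of this transducer, retaining only the output symbols on each edge; this yields a finite (non-deterministic) automaton over the alphabet $\{0,\dots,p-1\}$ whose accepted infinite words are precisely the standard $p$-adic expansions of points of $K$. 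This is exactly the forgetting-the-input-symbol construction already carried out at the end of the proof of Theorem~\ref{thm:main}, and indeed illustrated concretely in Example~\ref{ex:3adic} and Figure~\ref{fig:013b}.

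Finally I would invoke the classical determinization result recalled in Section~\ref{sec:notation}: a language of infinite words recognized by a non-deterministic finite automaton is also recognized by a deterministic one. Thus $K$ is recognized by a finite automaton reading standard $p$-adic expansions, which is precisely the definition of a $p$-adic path set fractal. Since the hypotheses of the corollary (contractions of the form $x \mapsto \pm p^k x + b$ with $b \in \Z_p \cap \Q$) are exactly the hypotheses bundled in \StandardClause that Theorem~\ref{thm:main} assumes, no additional work is needed.

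There is essentially no obstacle here, as the corollary is a direct reformulation of Theorem~\ref{thm:main} in the language of path set fractals; the only point requiring a line of care is confirming that the object produced by the theorem's transducer-to-automaton construction matches, verbatim, the definition of a $p$-adic path set fractal, namely a finite directed graph with outputs in $\{0,\dots,p-1\}$ whose accepted infinite words give the $p$-adic expansions of the elements. I would state the proof in one or two sentences: forget the input symbols of the transducer from Theorem~\ref{thm:main}, determinize, and observe that the result is exactly a path set automaton for $K$.
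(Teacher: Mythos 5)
Your proposal is correct and is essentially identical to the paper's own treatment: the paper derives this corollary as an immediate consequence of Theorem~\ref{thm:main}, whose proof already performs exactly the steps you describe (forgetting the input symbols of the transducer to get a non-deterministic automaton over $\{0,\dots,p-1\}$, then determinizing), so the corollary needs no further argument. Your one extra sentence of care---checking that the resulting automaton matches the definition of a $p$-adic path set fractal verbatim---is a reasonable addition but does not change the route.
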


\begin{example}
\label{ex:pm}
Consider a $5$-adic self-similar set given by the two maps
\begin{align*}
    & A: x \mapsto -5 x, \\
    & B: x \mapsto 5^2 x + 1/2.
\end{align*}

To construct our non-deterministic transducer, we start in state $\mathbf{i} = (0, +)$ and consider the actions of maps $A$ and $B$ in the positive direction with a carry of $0$.

In the case of the map $A$, we see that the output will be $0$ and the new state will be $(0, -)$.  The ``$-$'' is a change of signs that is a result of the map being a negative contraction.

We indicate this on Figure \ref{fig:pma} as a directed edge from $(0,+)$ to $(0,-)$ and labelling the edge $A/0$.  That is, input $A$ result in output $0$ and a change of state from $(0,+)$ to $(0,-)$.

Next we consider the action of $B$ on state $(0,+)$.  
It is worth noting that $1/2 = 3 + 2 \cdot 5 + (-1/2) \cdot 5^2$.
This means that the output will be $3,2$ (as we have a block of length $2$), and the new state will be $(-1/2, +)$.

In a similar fashion, we consider the action of $A$ and $B$ on the two new states $(0,-)$ and $(-1/2, +)$.  This results in four more directed edges, and one new state to consider, namely $(-1/2, -)$. 
We repeat this process on any new states that are found, until such time as no new states are found.

These are summarized in Figure \ref{fig:pma}.  

In general, the resulting transducer will take an input language from $\D^{\N}$ and produce a $p$-adic number.
In this case $\D = \{A, B\}$ and we produce a $5$-adic number.  
To produce the $p$-adic path set fractal (via an automaton), we replace input/output combinations on an edge with the output only.  
In addition, for any output which is a block of length $k$, we insert $k-1$ vertices to expand this out to a automaton with output of length one for each edge.
In general this resulting automaton need not be deterministic, (although in this case it is).  
If it is a non-deterministic automaton, we can convert this to the minimal deterministic automaton using a standard process. 
See Figure \ref{fig:pmb}.

\begin{figure}
\centering
\begin{subfigure}{0.5\textwidth}
\begin{tikzpicture}[->,>=stealth',shorten >=1pt,auto,node distance=3cm,
                    semithick]

  \node[initial,state] (A)                    {$(0,+)$ };
  \node[state]         (B) [right of=A] {$(0,-)$};
  \node[state]         (C) [below of=A] {$(-1/2,+)$};
  \node[state]         (D) [right of=C] {$(-1/2,-)$};

  \path (A) edge [bend left]  node {A/0} (B)
            edge [bend left]  node[left] {B/3,2} (C)
        (B) edge [bend left]  node {A/0} (A)
            edge [bend left]  node[right] {B/2,2} (D)
        (C) edge [bend left]  node[left] {B/0,0} (A)
            edge [bend left]  node {A/3} (D)
        (D) edge [bend left]  node {A/2} (C)
            edge [bend left]  node[right] {B/1,0} (B);
\end{tikzpicture}
\caption{Transducer}
\label{fig:pma}
\end{subfigure}
\begin{subfigure}{0.5\textwidth}
\begin{tikzpicture}[->,>=stealth',shorten >=1pt,auto,node distance=2cm,
                    semithick]

  \node[initial,state] (A) {$(0,+)$ };
  \node[state]         (B) [right of=A] {$(0,-)$};
  \node[state]         (AC) [below of=A] {b};
  \node[state]         (CA) [left of=AC] {a};
  \node[state]         (C) [below of=AC] {$(-1/2,+)$};
  \node[state]         (DB) [below of=B] {c};
  \node[state]         (BD) [right of=DB] {d};
  \node[state]         (D) [right of=C] {$(-1/2,-)$};

  \path (A) edge [bend left]  node {0} (B)
            edge   node[left] {3} (AC)
        (AC) edge  node[left] {2} (C)
        (B) edge [bend left]  node {0} (A)
            edge [bend left]  node[right] {2} (BD)
        (BD) edge [bend left] node[right] {2} (D)
        (C) edge [bend left]  node[left] {0} (CA)
            edge [bend left]  node {3} (D)
        (CA) edge [bend left] node[left] {0} (A)
        (D) edge [bend left]  node {2} (C)
            edge  node[right] {1} (DB)
        (DB) edge  node[right] {0} (B);
\end{tikzpicture}
\caption{DFA}
\label{fig:pmb}
\end{subfigure}
\caption{Transducer and DFA for Example \ref{ex:pm}}
\label{fig:pm}
\end{figure}
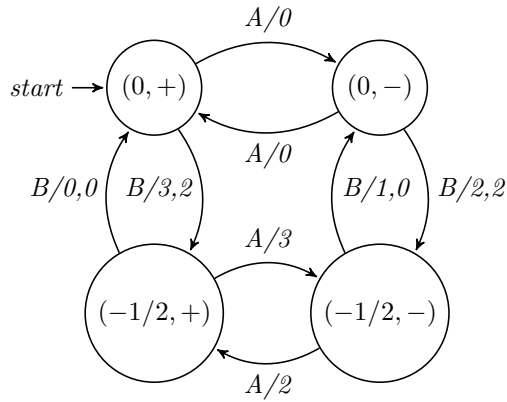
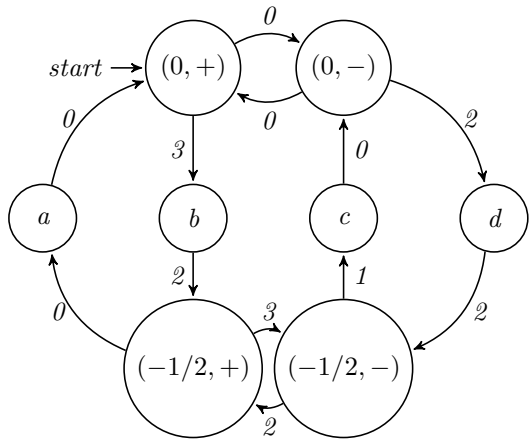

Labeling the vertices of the DFA in the order, $(0,+), (0,-), (1/2,+), (-1/2, -), a, b, c, d$, we can next create a $8 \times 8$ adjacency matrix $M$ for this automaton;
\[ M = \begin{bmatrix} 
0 & 1 & 0 & 0 & 0 & 1 & 0 & 0 \\
1 & 0 & 0 & 0 & 0 & 0 & 0 & 1 \\
0 & 0 & 0 & 1 & 1 & 0 & 0 & 0 \\
0 & 0 & 1 & 0 & 0 & 0 & 1 & 0 \\
1 & 0 & 0 & 0 & 0 & 0 & 0 & 0 \\
0 & 0 & 1 & 0 & 0 & 0 & 0 & 0 \\
0 & 1 & 0 & 0 & 0 & 0 & 0 & 0 \\
0 & 0 & 0 & 1 & 0 & 0 & 0 & 0 
\end{bmatrix}. \]
This has a dominant eigenvalue of $\frac{1+\sqrt{5}}{2}$.  
Hence, utilizing the techniques of \cite{AbramLagarias14a} we have that the Hausdorff dimension of this $5$-adic fractal is $\frac{\log\left(\frac{1+\sqrt{5}}{2}\right)}{\log(5)} \approx 0.298994$.
\end{example}

Let us conclude this section with the following remark. If one is interested only in the dimension of the attractor (and not, for instance, in the language of its points), there is an easier approach than that given by Theorem \ref{thm:main}. One can modify the contractions by linear transformations to get an ``easy case'' with the same dimension. 

\begin{theorem}
\label{thm:conjugate}
\StandardClause
Then there exists a linear map $L$ so that $L \circ F_i \circ L^{-1} (x) =  (-1)^{b_i} p^{k_i} x + d_i'$ with 
\begin{enumerate}
\item $d_i' \in \Z_p \cap \Z$ \label{case:integer}
\item If $b_i = 0$ then $d_i'\geq 0$, and if $b_i =1$ then $d_i' \leq 0$. \label{case:ltgt}
\item There exists an $i$ such that $d_i = 0$. \label{case:zero}
\end{enumerate}
The attractor for $\{L \circ F_i \circ L^{-1}\}$  is $L(K)$ and $\dimp(K) = \dimp(L(K))$.
\end{theorem}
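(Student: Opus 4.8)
The plan is to take $L$ to be an affine map $L(x) = ax + c$ with $a$ a positive integer and $c \in \Q$; composing a translation (to place a fixed point at $0$) with an integer scaling (to clear denominators) will produce all three conditions at once. First I would record the conjugation formula: a direct computation with $L^{-1}(y) = (y-c)/a$ gives
\[
L \circ F_i \circ L^{-1}(x) = (-1)^{b_i} p^{k_i} x + d_i', \qquad d_i' = a\,d_i + c\bigl(1 - (-1)^{b_i} p^{k_i}\bigr).
\]
In particular the linear part $(-1)^{b_i} p^{k_i}$ is untouched, so every conjugated map is again a contraction of the prescribed form; only the translation parts $d_i'$ must be engineered.

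The main idea is to reformulate everything in terms of the fixed points. Let $x_i^{\ast} = d_i/(1 - (-1)^{b_i} p^{k_i})$ be the fixed point of $F_i$; since $1 - (-1)^{b_i} p^{k_i}$ is a nonzero integer, $x_i^{\ast} \in \Q$, so it has a well-defined real sign. Because conjugation carries fixed points to fixed points, the fixed point of the $i$-th conjugated map is $L(x_i^{\ast})$, and therefore $d_i' = L(x_i^{\ast})\bigl(1 - (-1)^{b_i} p^{k_i}\bigr)$. The crucial observation is that condition \eqref{case:ltgt} collapses to a single inequality independent of $b_i$: when $b_i = 0$ the factor $1 - p^{k_i}$ is negative so $d_i' \ge 0 \iff L(x_i^{\ast}) \le 0$, and when $b_i = 1$ the factor $1 + p^{k_i}$ is positive so $d_i' \le 0 \iff L(x_i^{\ast}) \le 0$. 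Likewise condition \eqref{case:zero} ($d_{i_0}' = 0$ for some $i_0$) is equivalent to $L(x_{i_0}^{\ast}) = 0$.

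I would then choose $i_0$ with $x_{i_0}^{\ast} = \max_i x_i^{\ast}$ and set $c = -a\, x_{i_0}^{\ast}$, so that $L(x) = a(x - x_{i_0}^{\ast})$ and $L(x_i^{\ast}) = a(x_i^{\ast} - x_{i_0}^{\ast})$. For every $a > 0$ this yields $L(x_i^{\ast}) \le 0$ for all $i$ and $L(x_{i_0}^{\ast}) = 0$, so conditions \eqref{case:ltgt} and \eqref{case:zero} hold. With this choice $d_i' = a\,(x_i^{\ast} - x_{i_0}^{\ast})\bigl(1 - (-1)^{b_i} p^{k_i}\bigr)$ is a rational multiple of $a$; taking $a$ to be the least common multiple of the denominators of these finitely many rationals forces each $d_i'$ to be an integer, hence in $\Z_p \cap \Z$, giving condition \eqref{case:integer}, and such an $a$ is a positive integer, so the sign analysis still applies.

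It remains to address the attractor and dimension. From $K = \bigcup_i F_i(K)$ one gets $L(K) = \bigcup_i (L \circ F_i \circ L^{-1})(L(K))$, so by uniqueness of the attractor for the (still contracting) conjugated system, $L(K)$ is its attractor. Since $L$ is a bijection with $|L(x) - L(y)|_p = |a|_p\,|x-y|_p$, it scales every diameter by the constant $|a|_p$, hence $H_p^d(L(K)) = |a|_p^d\, H_p^d(K)$ and the critical exponent is unchanged, giving $\dimp(L(K)) = \dimp(K)$. The only delicate point is the simultaneity of the requirements: a priori the two halves of condition \eqref{case:ltgt} pull in opposite directions, and the integer scaling needed for \eqref{case:integer} might spoil the signs. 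Both concerns dissolve once the sign constraints are rewritten as the single condition $L(x_i^{\ast}) \le 0$, which is preserved by any positive scaling; identifying this reformulation is the heart of the argument.
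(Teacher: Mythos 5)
Your proposal is correct and takes essentially the same route as the paper's proof: a translation chosen so that the offsets acquire the required signs with one of them equal to zero, followed by a scaling by the least common multiple of the denominators to reach $\Z_p \cap \Z$, with the attractor and dimension claims following from uniqueness of the attractor and the fact that $L$ scales all $p$-adic diameters by the constant $|a|_p$. Your fixed-point reformulation (conditions \eqref{case:ltgt} and \eqref{case:zero} collapse to $L(x_i^{\ast}) \leq 0$ for all $i$ with equality for some $i_0$) merely makes explicit the choice of translation parameter that the paper asserts exists without detail, and is a welcome clarification rather than a different argument.
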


\begin{proof}
Let $L_1(x) = x + a$ and $F(x) = (-1)^b p^k x + d$.
One can check that 
    $L_1 \circ F \circ L_1^{-1}(x) = (-1)^b p^k x + d + (1-(-1)^b p^k) a$.
Let $c_{b,k} = 1 - (-1)^b p^k$.  
We notice that if $b = 1$ then $c_{b,k} > 0$ and otherwise $c_{b,k} < 0$.
There exists an $a$ such
$L_1 \circ F_i \circ L_1^{-1} (x) =  (-1)^{b_i} p^{k_i} x + d_i'$ with $d_i' \in \D' \subseteq  \Q_p$ finite,
    $d_i'\geq0$ for $b_i=0$, $d_i'\leq 0$ for $b_i=1$, and at least one $d_i' =0$.

We note that we do not at the moment have $d_i' \in \Z \cap \Z_p$.
That is, $L_1$ satisfies property \eqref{case:ltgt} and \eqref{case:zero}, but not necessary \eqref{case:integer}.

Let $L_2(x) = c x$ and $F(x) = (-1)^b p^k x + d'$.
One can check that 
    $L_2 \circ F \circ L_2^{-1}(x) = (-1)^b p^k x + c d'$.
By choosing $c$ as the lcm of the denominators of the $d_i'$ from above, we see that
$L_2 \circ L_1 \circ F_i \circ L_1^{-1} L_2^{-1}  (x) =  (-1)^{b_i} p^{k_i} x + d_i''$ with $d_i'' \in \Z_p \cap \Z$.
That is, these $d_i''$ satisfy condition \eqref{case:integer}.
Further we see that $d_i''$ continues to satisfy \eqref{case:ltgt} and \eqref{case:zero}

Setting $L := L_2 \circ L_1$ gives the desired result.
\end{proof}

\begin{example}
\label{ex:simplify}
Consider the $5$-adic self-similar set given by the two maps $A: x \to 5 x + \frac{1}{2}$ and $B: x \to 5 x + \frac{1}{3}$.  
Taking $L_1: x \to x + \frac{1}{12}$ we get
\begin{align*}
L_1 \circ A \circ L_1^{-1} (x) & = 5 x + \frac{1}{6}, \\
 L_1 \circ B \circ L_1^{-1} (x) & = 5 x.
\end{align*}
Taking $L_2: x \to 6 x$ gives
\begin{align*}
A'(x) := L_2 \circ L_1 \circ A \circ L_1^{-1} L_2^{-1} (x) & = 5 x + 1, \\
B'(x) := L_2 \circ L_1 \circ B \circ L_1^{-1} L_2^{-1} (x) & = 5 x.
\end{align*}
Using the result from \cite{AbramLagarias14a}, we see that the Haussdorff dimension of this fractal is $\tfrac{\log2}{\log5}$. The DFA for the self-similar set given by $\{A, B\}$ and by $\{A', B'\}$ are given in Figure \ref{fig:simplify}.

\begin{figure}
\centering
\begin{subfigure}[t]{0.45\textwidth}
\begin{tikzpicture}[->,>=stealth',shorten >=1pt,auto,node distance=2.5cm,
                    semithick]

  \node[initial,state] (A)              {$0$ };
  \node[state]         (B) [right of=A] {$-1/2$};
  \node[state]         (C) [below of=B] {$-5/6$};
  \node[state]         (D) [below of=C] {$-2/3$};
  \node[state]         (E) [below of=A] {$-1/3$};
  \node[state]         (F) [below of=E] {$-1/6$};

  \path (A) edge [bend left]  node {3} (B)
            edge [bend left]  node {2} (E)
        (B) edge [bend left]  node {0} (A)
            edge [bend left]  node {4} (C)
        (C) edge [bend left]  node {3} (D)
            edge [bend left]  node {2} (B)
        (D) edge [bend left]  node {4} (C)
            edge [loop right]  node {3} (D)
        (E) edge [bend left]  node {1} (F)
            edge [bend left]  node {0} (A)
        (F) edge [bend left]  node {2} (E)
            edge [loop right]  node {1} (F);
\end{tikzpicture}
\caption{DFA for Example \ref{ex:simplify} (unsimplified)}
\label{fig:simplifya}
\end{subfigure}
\begin{subfigure}[t]{0.45\textwidth}
\begin{tikzpicture}[->,>=stealth',shorten >=1pt,auto,node distance=2cm,
                    semithick]

  \node[initial,state] (A) {$0$ };

  \path (A) edge [loop below]  node {0} (A)
            edge [loop above]  node {1} (A);
\end{tikzpicture}
\caption{DFA for Example \ref{ex:simplify} (simplified)}
\label{fig:simplifyb}
\end{subfigure}
\caption{DFAs for Example \ref{ex:simplify}}
\label{fig:simplify}
\end{figure}
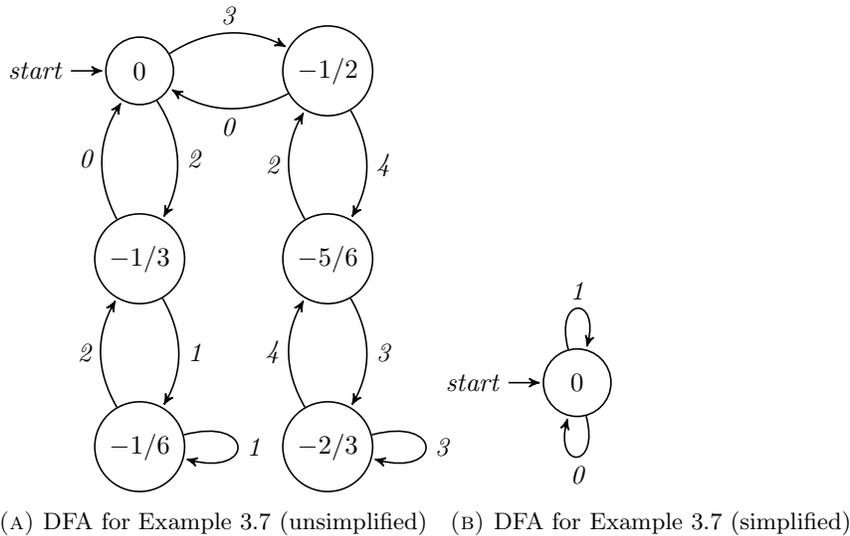

\end{example}

\section{Self-similar sets of dimension 1}
\label{sec:full}

Consider a self-similar set in $\R$. 
It is easy to see that if the set contains an interior point, then it necessarily contains an interval and the set has positive Lebesgue measure and Hausdorff dimension 1.
This is true in higher dimensions as well.  That is, if a self-similar set in $\R^n$ contains an interior point, then it will have positive Lebesgue measure and Hausdorff dimension $n$.

Surprisingly the converse is not true.
In \cite{CJPPS} an example is given of a self-similar set in $\R^2$ which has positive Lebesgue measure, but empty interior.
A more explicit example using countably many maps is given in 
    \cite{BS}.
To the best of the authors' knowledge, it is not known in $\R$ if an example exists of a  self-similar set with positive measure and empty interior.

In this section we will show in the case of $p$-adic self-similar sets whose offsets are in $\Z_p \cap \Q$ that the existence of an interior point is equivalent to the set having Hausdorff dimension $1$.  This in turn is equivalent to the set having positive Haar measure.

The next result is in fact true for all $p$-adic path set fractals, which contain as a subset $p$-adic self-similar sets with offsets in $\Z_p \cap \Q$.

\begin{theorem}
Let $K$ be a $p$-adic path set fractal.
Then the following are equivalent.
\begin{enumerate}
    \item $K$ has Hausdorff dimension $1$ \label{case:1}
    \item The minimal deterministic finite automaton describing the $p$-adic expansions of $K$ has a state $q$ such that there are transitions $q \to_{i} q$ for all $i = 0, 1, \dots, p-1$. \label{case:2}
    \item There exists a finite word $w$ such that the language of $p$-adic expansions of elements of $K$ contains the language $w \{0, 1, \dots, p-1\}^{\N}$. 
    \label{case:3}
\end{enumerate}
\end{theorem}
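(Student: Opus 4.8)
The plan is to prove the cycle of implications $(2)\Rightarrow(3)\Rightarrow(1)\Rightarrow(2)$, with the last implication carrying essentially all of the weight. Throughout, write $L\subseteq\{0,\dots,p-1\}^{\N}$ for the language of $p$-adic expansions of $K$, let $\A$ be the minimal DFA recognizing $L$ with adjacency matrix $T$, and recall from the discussion of path set fractals that $\dimp(K)=\frac{\log\rho(T)}{\log p}$ and that every state of $\A$ is accepting. For $(2)\Rightarrow(3)$, minimality makes $\A$ accessible, so I would pick a finite word $w$ leading from the initial state to the distinguished state $q$; since every letter read at $q$ returns to $q$, every infinite continuation stays at $q$ and is accepted, giving $w\{0,\dots,p-1\}^{\N}\subseteq L$. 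For $(3)\Rightarrow(1)$, if $w$ has length $m$ and $c=\sum_{i<m}w_ip^i$, then $K$ contains the ball $c+p^m\Z_p$, which is an isometric rescaling of $\Z_p$ and hence has Hausdorff dimension $1$; combined with $K\subseteq\Z_p$ and $\dimp(\Z_p)=1$, this forces $\dimp(K)=1$.

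The substance is $(1)\Rightarrow(2)$. First I would translate the hypothesis: $\dimp(K)=1$ is equivalent to $\rho(T)=p$. Since each state has at most one out-edge per letter, every row sum of $T$ is at most $p$, so this is precisely the extremal case of the row-sum bound. Permuting $T$ into block-triangular form along its strongly connected components, $\rho(T)$ equals the maximum of the spectral radii of the diagonal blocks, so I would fix a strongly connected component $C$ whose block $T_C$ satisfies $\rho(T_C)=p$; being a strongly connected component, $T_C$ is irreducible.

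The crux is to show that $C$ is \emph{complete}, meaning every state of $C$ has, for each letter, exactly one out-edge, all of which remain inside $C$. Suppose not; then some row sum of the within-$C$ matrix $T_C$ is strictly below $p$. I would form $\widehat T\ge T_C$ by adjoining edges inside $C$ until every row sum equals $p$. This adds at least one edge, so $\widehat T\neq T_C$; adding edges to a strongly connected graph keeps $\widehat T$ irreducible; and all row sums equal to $p$ yield $\widehat T\mathbf 1=p\mathbf 1$, so $\rho(\widehat T)=p$ by Perron--Frobenius. The strict spectral-radius monotonicity recorded in Section~\ref{sec:notation} (for $0\le M_1<M_2$ with $M_2$ irreducible, $\rho(M_1)<\rho(M_2)$) then gives $\rho(T_C)<\rho(\widehat T)=p$, contradicting $\rho(T_C)=p$. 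Hence $C$ is complete.

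Finally, completeness of $C$ means that from any state of $C$ one can read an arbitrary infinite word while remaining in $C$; as all states are accepting, the residual (future) language of every state of $C$ is the full language $\{0,\dots,p-1\}^{\N}$. Since distinct states of a minimal DFA have distinct residual languages, $C$ must be a single state $q$, whose $p$ out-edges are then self-loops labeled $0,1,\dots,p-1$, which is exactly $(2)$. I expect the main obstacle to be the completeness step: carefully distinguishing the within-$C$ out-degree counted by $T_C$ from the total out-degree bounded by $p$, and invoking strict monotonicity against the correct irreducible comparison matrix $\widehat T$. The collapse of $C$ to a single self-looping state is then a clean application of minimality via Myhill--Nerode.
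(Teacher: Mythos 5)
Your proof is correct and takes essentially the same route as the paper: both pass to the strongly connected components via the block-triangular form of the adjacency matrix, locate a component of spectral radius $p$, and rule out incompleteness by strict Perron--Frobenius monotonicity against an irreducible comparison matrix. If anything, your explicit construction of $\widehat T$ by padding row sums to $p$ while preserving irreducibility, and the Myhill--Nerode collapse of the component to a single self-looping state, make rigorous two steps that the paper's proof leaves informal (its comparison with ``the block corresponding to the full language'' and its passage to condition (2)).
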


\begin{proof}
It is clear that \eqref{case:3} implies both \eqref{case:1} and \eqref{case:2}.
Further, \eqref{case:2} implies both \eqref{case:1} and \eqref{case:3}.
Hence it is sufficient to prove that \eqref{case:1} implies either \eqref{case:2} or \eqref{case:3}.

Vertices of an acyclic directed graph can be permuted so that its adjacency matrix is upper triangular. Because a condensation of a graph (a graph of its strongly connected components) is directed acyclic, a permutation of vertices can be found so that the adjacency matrix is upper block triangular. In particular, the eigenvalues are the eigenvalues of the diagonal blocks (in our case corresponding to strongly connected components). Thus, if $K$ has Hausdorff dimension $1$, then at least one of the strongly connected components has to have dimension $1.$ However, a strongly connected component has an irreducible, non-negative adjacency matrix. If such a component has the language $\{0,\dots,p-1\}$, then it has dimension $1$ (and an eigenvalue $p$) and Case \eqref{case:3} is satisfied. If no strongly connected component has the language $\{0, \dots, p-1\}$, then they are all strictly smaller than the block corresponding to the full language.  As such, they will all have dominant eigenvalue strictly less than $p$.
\end{proof}

\section{The Essential class}
\label{sec:essential class}

In Section \ref{sec:self-similar are path set}, we constructed a non-deterministic transducer that outputs the language of expansions of points of a given $p$-adic self-similar set.  From this non-deterministic transducer we then constructed a deterministic finite automaton that accepts this language.

Let $Q$ be the set of states in this deterministic finite automaton.
Following the notation of \cite{F3, HareHareMatthews16} we will say that $Q_L \subseteq Q$ is a {\em loop class} if for all $q_1$ and $q_2$ in $Q_L$ there exists a valid path in $Q_L$ from $q_1$ to $q_2$.  
We will say that $Q_L$ is a maximal loop class if there are no loop classes ${Q_L}'$ with $Q_L \subsetneq {Q_L}'$.
We will say that $Q_L$ is an {\em essential class} if it is a maximal loop class, and further if all paths from $q \in Q_L$ stay in $Q_L$.

In the language of directed graphs, loop classes are also known as strongly connected components, and an essential class is a sink of the condensation of the digraph.

Essential classes have great impact on the study of self-similar sets in $\R$ and $\R^n$, especially self-similar measures (Section \ref{sec:measures}). An 
    important and key results is that, under reasonable conditions, every digraph constructed from a self-similar set has a unique 
    essential class.  Letting $\mu$ be the Hausdorff measure on this $p$-adic self-similar set, we have that almost all points have addresses that are eventually in the essential class. 
We prove the analogous results in this section.

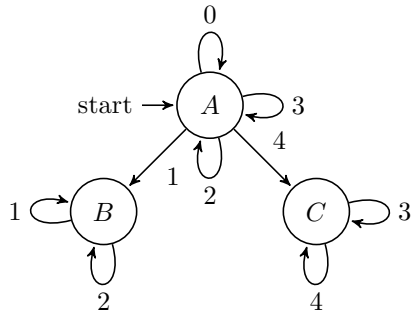
\begin{figure}
\centering
\begin{tikzpicture}[->,>=stealth',shorten >=1pt,auto,node distance=2cm,
                    semithick]

  \node[initial,state] (A)                    {$A$};
  \node[state]         (B) [below left of=A]       {$B$};
  \node[state]         (C) [below right of=A] {$C$};

  \path (A) edge              node {1} (B)
            edge              node {4} (C)
            edge [loop above] node {0} (A)
            edge [loop below] node {2} (A)
            edge [loop right] node {3} (A)
        (B) edge [loop left]  node {1} (B)
            edge [loop below] node {2} (B)
        (C) edge [loop right] node {3} (C)
            edge [loop below] node {4} (C);
\end{tikzpicture}
\caption{Path-set fractal for  Remark \ref{remark:path}}
\label{fig:path}
\end{figure}

\begin{remark}
\label{remark:path}
It is worth noting that this property does not translate for a general path set fractal, as is illustrated in Figure \ref{fig:path}.
This $p$-adic path set fractal has two essential classes, namely $\{B\}$ and $\{C\}$.  Further, the dimension of this path set fractal is $\log(3) / \log(p)$, whereas the dimension of the essential class is $\log(2) / log(p)$.  Hence the almost none (with respect to the Hausdorff measure) are in the Essential class.
\end{remark}

Notice that the adjacency matrix of a finite automaton is a non-negative matrix. Such a matrix has a dominant eigenvalue $\lambda$ corresponding to a non-negative eigenvector, such that $|\lambda|\geq |\lambda'|$ for any other eigenvalue $\lambda'.$ 

We first show that the NDFA constructed in the proof of Theorem \ref{thm:main} has a unique essential class under certain conditions.
\begin{lemma}
\label{lem:UEC}
Let $\F = \{F_i\}$ be a $p$-adic IFS, where each $F_i(x) = (-1)^{b_i} p^{k_i} x + d_i$, with $b_i \in \{0,1\}$, $k_i \in \Z$, $k_i \geq 1$, and $d_i \in \Z_p \cap \Z$.
Let $K$ be the $p$-adic self-similar set associated to $\F$.
Then there exists a NDFA, with a unique essential class, recognizing the language of $p$-adic expansions of $\F$.
\end{lemma}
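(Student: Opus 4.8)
The plan is to work with the explicit non-deterministic transducer built in the proof of Theorem \ref{thm:main} and then forget the input letters, so that the automaton's states are the carries $(s,\epsilon)$, with $s\in\Z$ and $\epsilon\in\{+,-\}$ recording the parity of negative contractions read so far. By construction every state is reachable from the initial carry $(0,+)$, so the set of states is finite and the essential classes are exactly the sink strongly connected components of this digraph. I would first reduce the claim to a confluence statement: \emph{if any two states admit a common descendant, then there is a unique sink component.} This is elementary --- given sink components $S_1\ni q_1$ and $S_2\ni q_2$ and a state $q_3$ reachable from both, the reachability paths cannot leave a sink, so $q_3\in S_1\cap S_2$ and hence $S_1=S_2$. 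It therefore suffices to show that any two states can be driven to a common state.

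Second, I would establish the contraction estimate that drives two carries together. After matching signs (reading one extra negative map, or a pair of them when every contraction is negative, lets me assume both states have sign $+$), reading an offset $d$ of block length $k$ from a carry $a$ produces the carry $\lfloor (a+d)/p^{k}\rfloor$. Hence reading a \emph{common} offset from two carries $a,b$ yields carries at distance at most $\lceil |a-b|/p^{k}\rceil$, which is strictly less than $|a-b|$ whenever $|a-b|\ge 2$, since $p^{k}\ge 2$. Iterating a common, sign-preserving word thus brings the two carries to distance at most $1$, and the whole difficulty is concentrated in the case of two carries $a$ and $a+1$.

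The main obstacle is this distance-one endgame. Reading an offset $d$ of block $k$ fails to merge $a$ and $a+1$ exactly when $(a+d)\equiv p^{k}-1\pmod{p^{k}}$, in which case $a$ emits the all-$(p-1)$ block and $a+1$ emits the all-zero block while the difference $1$ is preserved; any other offset merges them. The point at which self-similarity is essential --- and where general $p$-adic path set fractals such as the one in Remark \ref{remark:path} genuinely fail --- is that \emph{the same finite family of offsets is available at every carry.} I would exploit this to rule out a sustained obstruction. If some pair $(a,a+1)$ admitted no common descendant, then, the carry set being finite, its forward orbit under common reads would enter a cycle of pairs on which \emph{every} available offset fails to merge; the resulting congruences $d_i\equiv -1-a\pmod{p^{k_i}}$ would pin every visited lower carry to a single residue class modulo each $p^{k_i}$, and I expect the successor carries produced by distinct offsets to escape that class, contradicting finiteness of the cycle. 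Equivalently, such a cycle would force two reachable states with the singleton tail-languages $\{(p-1)^{\N}\}$ and $\{0^{\N}\}$ --- two self-looping sinks emitting constant digits --- a configuration I would argue cannot be generated from the initial carry $(0,+)$ by a single integer-offset IFS. (When every block length equals $1$ this endgame is immediate, since $a+d$ and $a+1+d$ already differ modulo $p$, so a single offset distinguishes them.) Once the pair is merged we obtain a common descendant, confluence holds, and the sink component --- that is, the essential class --- is unique.
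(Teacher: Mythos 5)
Your skeleton --- confluence of sink components plus the contraction estimate $|a'-b'|\le\lceil |a-b|/p^{k}\rceil$ for a common read --- is sound, and it correctly isolates where the difficulty lives. But the distance-one endgame, which you yourself flag with ``I expect'' and ``I would argue,'' is a genuine gap, and your parenthetical claim that it is immediate when all $k_i=1$ is false. Merging $a$ and $a+1$ under offset $d$ requires equal quotients $\lfloor (a+d)/p\rfloor=\lfloor (a+1+d)/p\rfloor$, which fails exactly when $a+d\equiv p-1\pmod p$; the fact that $a+d$ and $a+1+d$ differ modulo $p$ is irrelevant, and \emph{all} available offsets can lie in the bad class simultaneously. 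Concretely, for $p=3$ and the maps $x\mapsto 3x+2$, $x\mapsto 3x+5$, both offsets are $\equiv 2\pmod 3$, so no single read merges the pair $(0,1)$; worse, reading $2$ sends $(0,1)$ to itself. (Here the pair eventually merges via $(1,2)$, and in the truly persistent configuration --- the bad set closed under every carry map --- one can show the contraction of differences forces the bad set to be a singleton $\{a^*\}$ with $d_i=(a^*+1)(p^{k_i}-1)$ for every $i$, i.e., two adjacent fixed points $a^*,a^*+1$ of all the carry maps, and then a separate reachability argument is needed to show both states cannot be reached from $(0,+)$; the negative-orientation transitions $s'=\frac{1}{p^k}(D_k(s+d)-s-d)$ add a further wrinkle you do not address. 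None of this is congruence bookkeeping that ``falls out''; it is the actual content of the lemma under your strategy.)

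The paper avoids the pairwise-merging endgame entirely, and you may want to adopt its mechanism: the carry transitions are order-preserving, so one takes the word $F_\omega$ that drives the initial state $(0,+)$ to the maximal positive-orientation carry $(q_{\max},+)$ (assuming $q_{\max}>0$; the case $q_{\min}<0$ is symmetric, and $Q^+=\{(0,+)\}$ is handled directly), and checks that iterating this single word strictly increases any carry below $q_{\max}$ while fixing the top. Hence every state in $Q^+$ is absorbed into $(q_{\max},+)$, every state in $Q^-$ funnels into $Q^+$ by one orientation-reversing map, and $(q_{\max},+)$ is a common descendant of \emph{all} states at once --- which immediately forces it into every essential class and yields uniqueness. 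In short: replace your pairwise contraction-plus-endgame by monotonicity plus extremality of a single state; as written, your proposal is incomplete precisely at its load-bearing step.
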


\begin{remark} Note that here the $d_i \in \Z \cap \Z_p$.
\end{remark}

\begin{proof}[Proof of Lemma \ref{lem:UEC}]
Let $Q$ be the set of states of the directed graph associated to $\F$.
Let $Q^+ \subseteq Q$ be the set of states in the positive orientation and $Q^- \subseteq Q$ be the set of states in the negative orientation.
If $Q^+ = \{(0, +)\}$ we are done.
To see this we note that any combination of maps with an even number of maps with $b_i = 1$ will be in $Q^+$.
Hence, $(0, +)$ will be a descendant of all states, and hence in the essential class.
Further, all descendants of $(0, +)$ will be in the essential class.
As $(0, +)$ is the initial state, this shows that $Q$ is the essential class.

Let $q_{\max} = \max q$ and $q_{\min} = \min q$ where the maximum and minimum are taken
    over all $(q, +) \in Q^+$.
We may assume that one of $q_{\max}$ or $q_{\min}$ is non-zero.
Assume that $q_{\max}  > 0$.  The case where $q_{\min} < 0$ is similar.
There exists a sequence of maps, $F_{a_1}, F_{a_2} \dots, F_{a_k}$ such that
    $F_{a_1} \circ \dots \circ F_{a_n}$ acting on $(0, +)$ will have a final 
    state $(q_{\max}, +)$.
Call this map $F_\omega$.
Let $(q_1, +), (q_2, +) \in Q^+$.
Let $(q_i', +)$ be the resulting state when applying $F_\omega$ to $(q_i, +)$.
We see that if $q_1 < q_2$ then $q_1' \leq q_2'$.
Further we see that if $q_1 < 0$ then $q_1 < q_1'$. 
This implies that repeated applications of $F_\omega$ to any state in $Q^+$ will eventually result in the state $(q_{\max}, +)$.

Let $(q, -) \in Q^-$. 
Let $F_i$ be any map with $b_i = 1$.
We see that the application of $F_i$ to $(q, -)$ will be in $Q^+$.
Hence, by the previous comment, all states in $Q^-$ will have $(q_{\max}, +)$ as a descendant.

As $(q_{\max}, +)$ is a descendant of all states, it is the descendant of all states 
    in the essential class.
Hence it is in the essential class.
Hence all descendants of $(q_{\max}, +)$ are in the essential class.
Hence the essential class is unique.
\end{proof}

\begin{lemma}
\StandardClause
Then there exists a NDFA, with a unique essential class, recognizing the language of $p$-adic expansions of $\F$.
\end{lemma}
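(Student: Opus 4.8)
The plan is to reduce to the integer-offset case already settled in Lemma~\ref{lem:UEC} by means of the linear conjugacy supplied by Theorem~\ref{thm:conjugate}. First I would apply that theorem to produce a linear map $L$ for which $\tilde\F=\{L\circ F_i\circ L^{-1}\}$ has offsets $d_i''\in\Z_p\cap\Z$ and attractor $\tilde K=L(K)$. Here $L=L_2\circ L_1$ with $L_1(x)=x+a$ and $L_2(x)=cx$, and one checks from the construction that $a\in\Z_p\cap\Q$ (it equals $-d_{i_0}/c_{b_{i_0},k_{i_0}}$ for a $p$-adic unit $c_{b,k}=1\mp p^{k}$) and that $c$ is coprime to $p$; in particular $|c|_p=1$, so $L$ is an isometry of $\Q_p$. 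By Lemma~\ref{lem:UEC} the transducer associated to $\tilde\F$ as in Theorem~\ref{thm:main} yields an NDFA $\tilde A$ recognizing the $p$-adic expansions of $\tilde K$ with a unique essential class; moreover its proof actually exhibits a state $\tilde q_{\max}$ that is reachable from every state of $\tilde A$.

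The obstruction is that $\tilde A$ recognizes the expansions of $\tilde K$, not of $K$. I would therefore keep the transducer $A$ for $\F$ itself (its states $S$ being the reachable carries, with signs) and relate the two graphs through their common address space. Since $\F$ and $\tilde\F$ share the alphabet $\{1,\dots,n\}^{\N}$, every address prefix $a_1\cdots a_m$ simultaneously determines an $\F$-carry $s_m$ and a $\tilde\F$-carry $\tilde s_m$. The set of reachable pairs $R=\{(s_m,\tilde s_m)\}\subseteq S\times\tilde S$ is finite, and the two coordinate projections $\psi\colon R\to S$ and $\phi\colon R\to\tilde S$ are transition-respecting surjections of digraphs, since reading a symbol $i$ sends $(s,\tilde s)$ to $(T_i s,\tilde T_i\tilde s)$.

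The engine of the argument is then a transfer principle: if a finite digraph $G$ has a unique sink strongly connected component, so does the image of $G$ under any transition-respecting surjection, because the image of the sink component is strongly connected and every sink component must meet it, forcing uniqueness. Applied to $\psi$, this reduces the claim to showing that $R$ has a unique essential class. To establish that, I would lift the universal reachability of $\tilde q_{\max}$: from any $(s,\tilde s)\in R$, reading a word that drives $\tilde s$ to $\tilde q_{\max}$ in $\tilde A$ drives $R$ into the fiber $\phi^{-1}(\tilde q_{\max})$, so this fiber is reachable from every vertex of $R$, and hence every sink component of $R$ intersects it. It then suffices to locate a single state of the fiber that is reachable from every vertex of $R$, since a universally reachable state forces a unique sink component.

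The hard part will be exactly this last step. In the integer-offset case the carry map is the monotone map $s\mapsto\lfloor(s+d)/p^{k}\rfloor$, which is why Lemma~\ref{lem:UEC} can simply select $q_{\max}$; for $d_i\in\Z_p\cap\Q$ this monotonicity genuinely fails, since one can produce carries $v_1<v_2$ with $T(v_1)>T(v_2)$, so no maximum is available on the fiber. Instead I expect to exploit that the return words at $\tilde q_{\max}$ (which exist because $\tilde q_{\max}$ lies in the nontrivial sink component of $\tilde A$) may be taken arbitrarily long, so that the map they induce on the finite fiber $\phi^{-1}(\tilde q_{\max})$ is a genuine contraction up to a bounded perturbation; iterating a suitably long such word should collapse the fiber onto a single universally reachable state. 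Making this contraction estimate precise—ruling out that the iterates stabilize on a small cluster rather than a single point, and thereby pinning down one universally reachable state—is where the real work lies.
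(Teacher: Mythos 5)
Your reduction is sound up to the point you yourself flag, but that flagged step is not a deferred technicality: it is the entire content of the lemma, and your route makes it strictly harder than what is actually needed. First, your transfer principle reduces the claim to showing that the reachable pair-automaton $R$ has a unique essential class, and this is genuinely stronger than uniqueness for each factor: it can fail for purely digraph-theoretic reasons. (Take both factors to have unique essential classes that are $2$-cycles toggled by every input letter, with one factor having a transient initial state branching into the two phases; the reachable product then contains two disjoint $2$-cycles, each a sink component.) So nothing short of the specific arithmetic relating the two carry automata can rescue the reduction, and that arithmetic is exactly what your sketch omits. Second, the proposed mechanism for collapsing the fiber $\phi^{-1}(\tilde q_{\max})$ --- that long return words induce ``a genuine contraction up to a bounded perturbation'' --- is unsupported: iterating a self-map of a finite set only guarantees eventual periodicity, not a fixed point, and the monotonicity that forces collapse in Lemma \ref{lem:UEC} (the integer carry map $s\mapsto\lfloor (s+d)/p^k\rfloor$ is order-preserving) is, as you yourself observe, unavailable for rational carries. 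As written, the argument does not close. (Your side observation that $L$ is an isometry, with $a\in\Z_p\cap\Q$ and $c$ coprime to $p$, is correct but not load-bearing.)

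For comparison, the paper's proof never conjugates --- Theorem \ref{thm:conjugate} changes the attractor to $L(K)$, which is exactly the obstruction you identified --- and never forms a product. It works inside the single transducer for $\F$ and converts rational carries to integer ones by pumping: if $q_1$ is an essential state reached from the initial state by the input word $\sigma$, and $q_1$ is a non-integer rational with denominator $b_1$ (coprime to $p$) whose $p$-adic expansion has period length $c_1$, then the state reached by $\sigma^{b_1 c_1}$ equals
\begin{equation*}
q_1 + p^{|\sigma|}q_1 + \cdots + p^{|\sigma|(b_1 c_1 - 1)}q_1,
\end{equation*}
and a counting argument on the periodic digit blocks (each fractional value $e_i/b_1$ occurs a number of times divisible by $b_1$) shows this sum is an integer. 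Thus every essential state has an integer descendant \emph{in the same automaton}, after which the $q_{\max}$ argument of Lemma \ref{lem:UEC} yields a common descendant of any two essential states, forcing all essential classes to coincide. This pumping identity is precisely the missing lemma your plan would need in order to control the fiber; with it in hand, the detour through $\tilde\F$ and $R$ becomes unnecessary.
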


\begin{proof}
All directed graphs with positive out-degree for every vertex will have at least one essential class.
As this is the case we are dealing with, it suffices to show that the essential class is unique.

Assume that $q_1$ and $q_2$ are states in possibly different essential classes.
If there exists a map $F_i: x \mapsto (-1)^{b_i} p^{k_i} x + d_i$ with $b_i = 1$,
    the the child of a state under this map will have the opposite orientation of its parent.
As such, we can assume without loss of generality that both $q_1$ and $q_2$ has positive orientation.

We will first show that both $q_1$ and $q_2$ have descendants $q_1'$ and $q_2'$ which are integers
    and in the positive orientation.
Then we will show that the states $q_1'$ and $q_2'$ have a common descendant. The rest then follows from the definition of essential class.

If $q_1$ or $q_2$ are initially integers, we take $q_1 = q_1'$ and $q_2 = q_2'$, as appropriate.

Otherwise, consider a path from the initial state $\mathbf{i}$ to $q_1$.  
This is the image of the transducer acting on some word $\sigma \in \{1,2,\dots,n\}^*$. As $q_1$ is in the positive orientation, we see that the word $\sigma$ acting on a state preserves orientation.
Assume $q_1$ is a state labeled by a non-integer rational with denominator $b_1$. It is worth noting that $p \nmid b_1$.
As $q_1$ is a non-integer in $\Q \cap \Z_p$, we see that $q_1$ has an eventually periodic $p$-adic expansion.
Assume that this expansion has period of length $c_1$ and pre-period of length $d_1$.

Consider the state $q_1'$ given by the image of $\sigma^{b_1 c_1} = \underbrace{\sigma \sigma \dots \sigma}_{b_1 c_1}$ under the transducer.
As this is a descendant of $q_1$, we see that it is in the essential class.
We will next show that $q_1'$, a descendant of $q_1$, is an integer.

We adopt the notation
$[a_0, a_1, a_2, \dots, a_n]$ to mean $\sum_{i=0}^n a_i p^i$, and the analogous notation for 
    infinite $p$-adic expansions.
For eventually periodic $p$-adic expansions, we will use the notation
    $$[a_0, a_1, \dots, a_{d-1}, \overline{a_{d}, \dots, a_{{d}+c-1}}]$$ where
    the $a_0, \dots, a_{d-1}$ is the pre-periodic component of length $d$, and $a_{d}, \dots, a_{d+c-1}$ is the
    periodic component of length $c$. We can also consider different periods (of the same length) and longer pre-periods (containing parts of the period, or even repeated period). Namely, we can write 
 $q_1$ as 
\[ q_1 = [a_0, a_1, \dots, a_{d-1}, a_{d}, \dots ]
          + p^N [\overline{a_{d+j}, \dots, a_{d+c-1}, a_{d},\dots, a_{d+j-1}}]\] for some $N\in\N$,
 $j \in \{0, 1, 2, \dots, c-1\}$, and where the indices of the periodic part are taken modulo $c$ in the range $\{d,\dots,d+c-1\}$.
We note that $[\overline{a_{d+j}, \dots, a_{d+c-1}, a_{d}, \dots, a_{d+j-1}}]$ is 
    a rational number with denominator $b_1$.
Hence we can write $q_1$ as 
\[ q_1 = a_0 + p^N \frac{e_0}{b_1}, \quad a_0, e_0 \in\Z.\]

We can similarly write 
\[ p^{|\sigma|} q_1 = a_1 + p^N \frac{e_1}{b_1}, \quad a_1,e_1\in\Z\]
and, in general 
\[ p^{|\sigma| i} q_1 = a_i + p^N \frac{e_i}{b_1}, \quad a_i,e_i\in\Z\]
for $i \in 0, 1, \dots, b_1 c - 1$.

We see that $e_i = e_{i+c}$ for $i = 0, 1, \dots, b_1 c - 1$.
Hence, for any particular choice of $e^*\in\{e_0,e_1,\dots,e_{b_1c-1}\}$, the number of $i \in \{0,1, \dots, b_1 c - 1\}$ such that $e_i = e^*$ is divisible
    by $b_1$.
Hence the sum of the fractions with $e_i = e^*$ is an integer.
As this is true for all $e^*$ we have that 
\begin{equation}\label{eq:sigma_b_c} q_1 + p^{|\sigma|} q_1 + \dots + p^{|\sigma|(b_1 c_1 - 1)} q_1\end{equation} is an integer, and also, is precisely equal to $q_1'$ defined above as the image of $\sigma^{b_1c}$. To see the latter, remember that $q_1$ is encoded by $\sigma$ (in the input alphabet), thus $q_1+p^{|\sigma|}q_1$ is encoded by $\sigma^2$. In conclusion, \eqref{eq:sigma_b_c} is encoded by $\sigma^{b_1c}.$

We similarly construct $q_2'$, a descendant of $q_2$ and an integer.

This gives us two paths from the initial state $\mathbf{i}$ to two states in the essential class $q_1'$ and $q_2'$ which are 
    both integers.
We next show that $q_1'$ and $q_2'$ have a common descendant, say $q^*$.
If $q_1' = q_2'$ we are done, hence we can assume they are not equal.
We can now use an argument similar to Lemma \ref{lem:UEC} to find a common descendant of $q_1'$ and 
    $q_2'$.
As $q_1$ and $q_2$ are both descendants of all of their descendants (by the definition of 
    an essential class), we see that $q_1$ and $q_2$ are in the same essential class.

This proves that the essential class is unique.
\end{proof}

\begin{example}
\label{ex:5-adic}
Consider the self-similar set given by $p = 5$ and the maps $A: x \to 5 x $ and $B: x \to 5 x - 1/3$.  
The transducer for this self-similar set is given in Figure \ref{fig:5-adic}

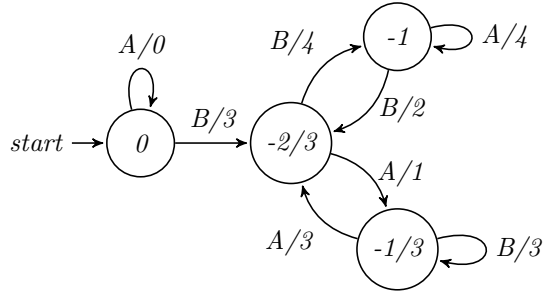
\begin{figure}
\begin{center}
\begin{tikzpicture}[->,>=stealth',shorten >=1pt,auto,node distance=2cm,
                    semithick]

  \node[initial,state] (A)                    {0};
  \node[state]         (B) [right of=A]       {-2/3};
  \node[state]         (C) [above right of=B] {-1};
  \node[state]         (D) [below right of=B] {-1/3};
  
  \path (A) edge              node {B/3} (B)
            edge [loop above] node {A/0} (A)
        (B) edge [bend left] node {B/4} (C)
            edge [bend left] node[right] {A/1} (D)
        (C) edge [loop right] node {A/4} (C)
            edge [bend left] node[right] {B/2} (B)
        (D) edge [loop right] node {B/3} (D)
            edge [bend left] node {A/3} (B);
\end{tikzpicture}
\end{center}
\caption{Transducer for Example \ref{ex:5-adic}}
\label{fig:5-adic}
\end{figure}

As both $A$ and $B$ preserve orientation, we see that all states have positive orientation.
Consider the two states, $-1/3$ and $-2/3$ in the essential class of the transducer.
We see that $-1/3$ is the image of $\sigma_1 = BA$ under the transducer, and $-2/3$ is the image of $\sigma_2 = B$.

As before, we will adopt the notation
$[a_0, a_1, a_2, \dots, a_n]$ to mean $\sum_{i=0}^n a_i p^i$, and the equivalent notation for 
    infinite $p$-adic expansions.
For eventually periodic $p$-adic expansions, we will use the notation
    $$[a_0, a_1, \dots, a_k, \overline{a_{k+1}, \dots, a_{k+n}}]$$ where
    the $a_0, \dots, a_k$ is the per-periodic component, and $a_{k+1}, \dots, a_{k+n}$ is the
    periodic component.

We note that $-1/3 = [\overline{3, 1}] = 3 + 1\cdot 5 + 3 \cdot 5^2 + 1 \cdot 5^3 + 3 \cdot 5^4 + \dots$ and
    $-2/3 = [\overline{1,3}]$.
We see that the period of the $p$-adic expansion of both $-1/3$ and $-2/3$ is of length $2$, the pre-periodis are of length $0$ 
    and the denominator in both cases is $3$.

Let $N = d_1 + |\sigma_1| c_1 b_1 = 0+2 \cdot 2 \cdot 3 = 12$.
We note that 
\begin{align*}
    -1/3 & = [3,1,3,1,3,1,3,1,3,1,3,1] + (-1/3) \cdot 5^{12} 
         & = a_0 + (-1/3)\cdot  5^{12} \\
    -1/3 \cdot 5^2& = [0,0,3,1,3,1,3,1,3,1,3,1] + (-1/3) \cdot 5^{12} 
         & = a_1 + (-1/3)\cdot  5^{12} \\
    -1/3 \cdot 5^4& = [0,0,0,0,3,1,3,1,3,1,3,1] + (-1/3) \cdot 5^{12} 
         & = a_2 + (-1/3)\cdot  5^{12} \\
    -1/3 \cdot 5^6& = [0,0,0,0,0,0,3,1,3,1,3,1] + (-1/3) \cdot 5^{12} 
         & = a_3 + (-1/3)\cdot  5^{12} \\
    -1/3 \cdot 5^8& = [0,0,0,0,0,0,0,0,3,1,3,1] + (-1/3) \cdot 5^{12} 
         & = a_4 + (-1/3)\cdot  5^{12} \\
    -1/3 \cdot 5^{10}& = [0,0,0,0,0,0,0,0,0,0,3,1] + (-1/3)\cdot  5^{12} 
         & = a_5 + (-1/3)\cdot  5^{12} \\
\end{align*}
This gives us that the image of $\sigma_1 \sigma_1 \sigma_1 \sigma_1 \sigma_1 \sigma_1$ is 
\begin{align*}
a_0 + a_1 + \dots 
    +a_5 + (-2) \cdot 5^{12} =\ 
 & [3,1,1,3,4,4,2,1,1,3,4,4] + (-1) \cdot 5^{12}. 
\end{align*}
It is worth noting that the output from the transducer on $(BA)^6 = B, A, B, A, \dots, B, A$ is $3$, $1$, $1$, $3$, $4$, $4$, $2$, $1$, $1$, $3$, $4$, $4$ and 
    we end in state $-1$.

Similarly for $\sigma_2$ we have $b_2 = 3$ and $c_2 = 2$.
Taking $N_2 = d_2 + |\sigma_2| c_2 b_2 = 0 + 1 \cdot 2 \cdot 3 = 6$ we have
\begin{align*}
    -2/3 & = [1,3,1,3,1,3] + (-2/3) \cdot 5^{6} 
         & = a_0' + (-2/3)\cdot  5^{6} \\
    -2/3\cdot 5 & = [0,1,3,1,3,1] + (-1/3) \cdot 5^{6}
         & = a_1' + (-1/3)\cdot  5^{6} \\
    -2/3 \cdot 5^2 & = [0,0,1,3,1,3] + (-2/3)\cdot  5^{6} 
         & = a_2' + (-2/3)\cdot  5^{6} \\
    -2/3 \cdot 5^3 & = [0,0,0,1,3,1]+ (-1/3)\cdot  5^{6} 
         & = a_3' + (-1/3)\cdot  5^{6} \\
    -2/3 \cdot 5^4 & = [0,0,0,0,1,3] + (-2/3)\cdot  5^{6} 
         & = a_4' + (-2/3) \cdot 5^{6} \\
    -2/3 \cdot 5^5 & = [0,0,0,0,0,1] + (-1/3) \cdot 5^{6}
         & = a_5' + (-1/3) \cdot 5^{6} \\
\end{align*}
This gives us that the image of $\sigma_2 \sigma_2 \sigma_2 \sigma_2 \sigma_2 \sigma_2$ is 
    \[ a_0' + a_1' + a_2' + a_3' + a_4' +a_5' + (-3) \cdot 5^6 = [1,4,0,4,0,4] + (-1) \cdot 5^6. \]

It is worth noting that the output from the transducer on $B^6= B, B, B, B, B, B$ is $1, 4, 0, 4, 0, 4$ and we end in state $-1$.

At this point we see that the state $-1$ is a common descendant of both state $-1/3$ and $-2/3$, and hence $-1/3$ and $-2/3$ are in the same
    essential class.
    
If these were different (integer) states then we would have needed to use the additional techniques from Lemma \ref{lem:UEC}.
\end{example}

\begin{definition}
    We say that a point $x \in K$ is {\em essential} or an {\em essential point} if the path through the DFA is eventually in an essential class.
    We say a point $x \in K$ is {\em non-essential} if it is not essential.
\end{definition}

\begin{theorem}\label{thm:dimEC}
\StandardClause
Let $K' \subseteq K$ be the set of non-essential points. 
Let $\mu_{K,p} =H_p^{\dimp(K)}$ be the Hausdorff measure with respect to $K$.
Then the following are true.
\begin{enumerate}
\item $0 < \mu_K(K) < \infty$,  \label{part:finite}
\item $\mu_K(K')=0$,           \label{part:zero}
\item $\dimp(K')<\dimp(K).$    \label{part:smaller}
\end{enumerate}
\end{theorem}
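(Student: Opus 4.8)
The plan is to use the unique essential class from the preceding lemma together with the spectral-radius comparison for non-negative matrices recorded in the notation section. First I would pass to the minimal deterministic finite automaton $\A$ recognizing the $p$-adic expansions of $K$, whose adjacency matrix $T$ is non-negative with dominant eigenvalue $\lambda = p^{\dimp(K)}$. Permuting the states so that $T$ is in block upper-triangular form with diagonal blocks indexed by the strongly connected components (loop classes), the spectral radius $\rho(T) = \lambda$ equals the maximum of the spectral radii of the diagonal blocks. The key structural input is that there is a \emph{unique} essential class $E$, and every nonempty path eventually falls into $E$ (it is the unique sink of the condensation). I expect the main work to be proving that this essential block $T_E$ is precisely the block attaining the maximal spectral radius, so that $\rho(T_E) = \lambda$ while every other loop class has strictly smaller spectral radius.

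For part \eqref{part:smaller}, the non-essential points $K'$ are exactly those whose accepting path never enters $E$, equivalently those recognized by the sub-automaton $\A'$ obtained by deleting the states of $E$ (and all edges into $E$). The adjacency matrix $T'$ of $\A'$ is a strict principal submatrix of $T$ supported away from the essential block. Since $E$ is the unique essential class, every loop class other than $E$ is non-essential and, crucially, cannot have spectral radius equal to $\lambda$: if some other strongly connected component had an irreducible block with spectral radius $\lambda = p$ it would recognize the full language $\{0,\dots,p-1\}^{\N}$ from one of its states, forcing that component to be a sink and hence to coincide with the unique essential class $E$, a contradiction. Therefore $\rho(T') < \lambda$, giving $\dimp(K') = \tfrac{\log \rho(T')}{\log p} < \tfrac{\log\lambda}{\log p} = \dimp(K)$, which is \eqref{part:smaller}.

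Part \eqref{part:zero} then follows, since $\mu_{K,p}$ is the $\dimp(K)$-dimensional Hausdorff measure and $K'$ has strictly smaller Hausdorff dimension, so $H_p^{\dimp(K)}(K') = 0$ by the definition of Hausdorff dimension as the threshold where the measure drops from $\infty$ to $0$. For part \eqref{part:finite}, the finiteness and positivity of $\mu_{K,p}(K)$ follow from the standard theory of Hausdorff measures on the attractors of irreducible non-negative automata (the Abram--Lagarias framework cited for the computation of $\dimp(K)$ via the dominant eigenvalue): one covers $K$ by the cylinder sets corresponding to length-$m$ paths in $\A$, each of diameter $p^{-m}$, and counts them using $T^m$. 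The number of such cylinders grows like $\lambda^m = p^{m\dimp(K)}$ up to bounded multiplicative constants coming from the Perron eigenvector, which gives a uniform two-sided bound $0 < c \le \mu_{K,p}(K) \le C < \infty$; the lower bound uses a mass-distribution argument on the essential class, whose block is irreducible.

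The hard part will be establishing that the essential block attains the strict maximum of the spectral radius, i.e. ruling out a second strongly connected component sharing the dominant eigenvalue. This is exactly where uniqueness of the essential class (rather than mere existence) is needed: one must argue that any loop class achieving spectral radius $\lambda$ is forced to be a sink and therefore, by the preceding lemma, equals $E$. The comparison principle $\rho(M_1) < \rho(M_2)$ for $0 \le M_1 < M_2$ with $M_2$ irreducible, stated in the notation section, is the tool that converts the strict combinatorial inclusion of the non-essential sub-automaton into the strict inequality $\rho(T') < \lambda$.
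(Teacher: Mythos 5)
Your argument has a genuine gap at exactly the point you flag as ``the hard part,'' and the justification you offer there does not close it. You claim that if a non-essential strongly connected component attained the dominant eigenvalue $\lambda$, it would recognize the full language $\{0,\dots,p-1\}^{\N}$ and hence be forced to be a sink. That row-sum argument is only valid when $\lambda = p$, i.e.\ when $\dimp(K)=1$ (an irreducible block over a deterministic automaton on $p$ letters has spectral radius $p$ only if every state uses all $p$ outgoing edges inside the block); for a general self-similar set $\lambda < p$ and nothing of the sort follows. Worse, the statement you need --- that every non-essential loop class has spectral radius strictly below $\lambda$ --- is \emph{false} for general $p$-adic path set fractals: the example of Remark \ref{remark:path} (Figure \ref{fig:path}) has a non-essential initial loop class with eigenvalue $3$ while both essential classes have eigenvalue $2$, so the dimension is carried entirely by the non-essential part. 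Hence any correct proof of part \eqref{part:smaller} must use self-similarity, and your proposal is purely automaton-theoretic and never invokes it; your deduction of \eqref{part:zero} from \eqref{part:smaller}, while logically fine, therefore rests on an unproven claim.

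The paper proceeds in the opposite order and injects self-similarity precisely where you are missing it. It first proves \eqref{part:zero} by a measure-decay argument (following Proposition 3.6 of \cite{HareHareNg18}): with $N$ the number of DFA states, every ball $U_n$ meeting $K'$ contains a depth-$(n+N)$ cylinder meeting $K$ but not $K'$, and --- this is the self-similar step --- any cylinder meeting $K$ contains a scaled affine copy of $K$, giving the uniform lower bound $p^{-N}\mu_{K,p}(U_n)$ for that cylinder's mass; summing yields $\sum \mu_{K,p}(U_{n+N}) \leq (1-p^{-N})\sum\mu_{K,p}(U_n)$, so $\mu_{K,p}(K')=0$, and the geometric decay rate then yields \eqref{part:smaller}. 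For part \eqref{part:finite} the paper does not do the eigenvector counting you sketch; it transports $K$ to $\R$ via the map $\iota_p$, cites Mauldin--Williams \cite{MW} for $0<\mu_0(\iota_p(K))<\infty$, and proves a two-sided comparison $\mu_0(\iota_p(K)) \leq \mu_{K,p}(K) \leq 2p^{d}\mu_0(\iota_p(K))$ by replacing each real covering set with at most two $p$-adic cylinders of comparable diameter. Your counting sketch for \eqref{part:finite} is in the same spirit but also glosses a real issue: if several components shared the spectral radius, $\|T^m\|$ can carry polynomial factors $m^k\lambda^m$, so the bound $N_m \asymp \lambda^m$ you assert is again contingent on the strict spectral gap you have not established.
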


\begin{proof}
As in \cite{AbramLagarias14a}, we define $\iota_p:\Q_p \to \R$ by 
    \[ \iota_p\left(\sum_{i \geq k} a_i p^i\right) = \sum_{i \geq k} \frac{a_i}{p^i}. \]
As $K$ is a path set fractal, we know from \cite{AbramLagarias14a} that
    $\dimp(K) = \dimr(\iota_p(K)) = \log(\lambda)/\log(p)$, where $\lambda$
    is the spectral radius of the adjacency matrix of the path set fractal.
We define $\mu_{0} := H_0^{\dimr(\iota_p(K))}$ as the Hausdorff measure with respect to $\iota_p(K)$.
By \cite{MW} it is known that $0 < \mu_{0}(\iota_p(K)) < \infty$.

We will show that \[ \mu_{0}(\iota_p(K)) \leq \mu_{K,p}(K) \leq 2 p^{\dimr(\iota_p(K))} \mu_{0}(\iota_p(K)) \]
which will prove part \eqref{part:finite} that $\mu_{K,p}(K)$ is positive and finite.

Let $d = \dimp(K) = \dimr(\iota_p(K))$.
Consider
\begin{equation*}
H_{\delta,0}^d(\iota_p(K)) := \inf \left\{ \sum_{i=1}^\infty \diamr(X_i)^d: \iota_p(K) \subseteq \bigcup X_i, \diamr(X_i) < \delta\right\}. 
\end{equation*}
We define 
\begin{align*}
\mathcal{C}_p & = \left\{ a + p^k \Z_p: k \geq 0, 0 \leq a \leq p^{k}-1 \right\} \text{    and} \\
\mathcal{C}_0 & = \left\{ \left[\frac{a}{p^k}, \frac{a+1}{p^k}\right]: k \geq 0, 0 \leq a \leq p^{k}-1 \right\}. 
\end{align*}
Note that for $0 \leq a \leq p^{k}-1$ we have $\iota_p(a + p^k \Z_p) = [\frac{a}{p^k}, \frac{a+1}{p^k}]$,
    hence there is a natural bijection between these two sets.
Hence, a cover of $\iota_p(K)$ restricted to sets in $\mathcal{C}_0$ corresponds to a cover of $K$ in $\Q_p$.

For each $X_i \subset \R$, with $p^{-k} < \diamr(X_i) \leq p^{-k+1}$ there exists two 
    cylinders, $A_i, B_i \in \mathcal{C}_p$ of diameter $p^{-k+1}$ such that
    $X_i \subseteq \iota_p(A_i) \cup \iota_p(B_i)$.
In the case that $X_i \in \mathcal{C}_0$ we can take $A_i = B_i$.
Let $\{X_i\}$ be a cover of $\iota_p(K)$.
Associate to each $X_i$ the pair $A_i, B_i$ such that 
    $X_i \subseteq \iota_p(A_i) \cup \iota_p(B_i)$ with 
    $p^{-k} \leq \diamr(X_i) \leq p^{1-k}$ and $p^{1-k} = \diamp(A_i) = \diamp(B_i)$.
We see that $\{A_i,B_i\}$ is a cover of $K$.
Further, 
\begin{align*}
    \diamr(X_i)^d & \leq \diamp(A_i)^d + \diamp(B_i)^d \\
                  & \leq (p \cdot \diamr(X_i))^d + (p \cdot \diamr(X_i))^d \\
                  & \leq 2 p^d \diamr(X_i)^d.
\end{align*}

This gives us that 
\[ H_{\delta,0}^d(\iota_p(K)) \leq H_{p \delta,p}^d(K) \leq 2 p^d H_{\delta,0}^d(\iota_p(K)).\]
Taking limits gives 
\[ \mu_{0}(\iota_p(K)) \leq \mu_{K,p}(K) \leq 2 p^d \mu_{0}(\iota_p(K)) \]
which proves part \eqref{part:finite} as desired.

To prove part \eqref{part:zero}, that $\mu_{K,p}(K') = 0$ we follow the proof of \cite[Proposition 3.6]{HareHareNg18}.
Notice that $\mu_{K,p}(K') = \lim_{n\rightarrow \infty}\sum \mu_{K,p}(U_n),$ where the sum is taken over the balls $U_n$ of diameter $p^{-n}$ such that $U_n\cap K'\neq\varnothing.$ 
We will show that there exists a $\lambda < 1$ and $N$ such that
\[ \sum \mu_{K,p}(U_{n+N}) \leq \lambda \mu_{K,p}(U_n).\]
Taking limits, this gives that 
\[\mu_{K,p}(K') \leq \lambda \mu_{K,p}(K'),\]
which will prove the result.

Set $N$ equal to the number of vertices in the DFA representation of the $p$-adic 
    self-similar set.
As $\mu_{K,p}$ is a measure, we see that for all $a$ that 
    \[\mu_{K,p}(a + p^n \Z_p) = \sum_{b_n, b_{n+2}, \dots, b_{N+n-1}} 
        \mu_{K,p}(a + b_n p^n + 
             \dots + b_{N+n-1} p^{n+N-1}+p^{N+n} \Z_p).\]
For ease of notation, we write $\bar b = b_n p^n + \dots b_{n+N-1} p^{n+N-1}$.
In particular, for $U_n$ a ball of radius $p^n$ with non-trivial intersection with $K'$ and $a$ such that $a + p^n \Z_p = U_n$ we have
\[ \mu_{K,p}(U_n) = \mu_{K,p}(a+p^n \Z_p) = \sum_{\bar b} \mu_{K,p}(a + \bar b +  p^{N+n}\Z_p). \]
Further, as there are $N$ vertices in the DFA,
    we see that there is at least one choice of $\bar b$ such that
    $(a + \bar b +p^{N+n} \Z_p) \cap K' = \varnothing$ and 
    $(a + \bar b +p^{N+n} \Z_p) \cap K \neq \varnothing$.
This follows as for any state in the DFA, there will exist a path of length at most $N$ which terminates in the 
    essential class.
As $(a + \bar b +p^{N+n} \Z_p) \cap K \neq \varnothing$ and $K$ is $p$-adic self-similar, we see that there is a scaled copy of $K$ 
    inside of $(a + \bar b +p^{N+n} \Z_p) \cap K$,
    and hence $\mu_{K,p}(a + \bar b +p^{N+n} \Z_p) \geq p^{-N} \mu_{K,p}(K)$.
We set $\lambda = (1-p^{-N})$.
Hence, for $U_n$ with $U_n \cap K' \neq \varnothing$, a ball of radius $p^n$, and $U_n = a + p^n \Z_p$ we have that
\begin{align*} 
\mu_{K,p}(U_n) & = \mu_{K,p}(a + p^n \Z_p) \\
               & = \sum_{\bar b} \mu_{K,p}(a + \bar b +  p^{N+n}\Z_p) \\
               & = \sum_{\substack{\bar b \\ a+ \bar b + p^{n+N} \Z_{p} \cap K' \neq \varnothing}} \mu_{K,p}(a+\bar b + p^{N+n} \Z_p) \\
               & \hspace{1cm} + 
               \sum_{\substack{\bar b \\ a+ \bar b + p^{n+N} \Z_{p} \cap K' = \varnothing}} \mu_{K,p}(a+\bar b + p^{N+n} \Z_p) \\
               & = \sum \mu_{K,p}(U_{n+N}) +  
                \sum_{\substack{\bar b \\ a+ \bar b + p^{n+N} \Z_{p} \cap K' = \varnothing}} \mu_{K,p}(a+\bar b + p^{N+n} \Z_p)
\end{align*}

We see in the last line that the second sum is strictly 
    positive and bounded from below by $p^{-N} \mu_{K,p}(a + \Z_p) = p^{-N} \mu_{K,p}(U_n)$.
This gives us that
\begin{align*}
\sum \mu_{K,p}(U_{n+N}) 
    & = \mu_{K,p}(U_n) - \sum_{\substack{\bar b \\ a+\bar b + p^{n+N} \Z_{p} \cap K' = \varnothing}} \mu_{K,p}(a+\bar b + p^{N+n} \Z_p)\\
    & \leq \mu_{K,p}(U_n) - p^{-N} \mu_{K,p}(U_n) \\
    & = \lambda \mu_{K,p}(U_n) 
\end{align*}
as required.

To see part \eqref{part:smaller} we see from the above result that 
   \[ \dimp K' \leq \lambda^{1/N} \dimp(K). \]
As $\lambda < 1$ the result follows.
\end{proof}

Now we can proceed to proving the main statement of this section.
\begin{theorem}\label{thm:EC}
\StandardClause
Then there exists a DFA, with a unique essential class, recognizing the language of $p$-adic expansions of $\F$.
\end{theorem}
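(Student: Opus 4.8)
The plan is to bootstrap from the preceding lemma, which already gives a nondeterministic automaton $N$ recognizing the language $L$ of $p$-adic expansions of $K$ and having a unique essential class $C$, and then to control what the subset construction does to that essential class. The organizing principle I would use is the elementary criterion: \emph{in a finite automaton all of whose states have positive out-degree, the essential class is unique if and only if every pair of reachable states has a common descendant.} (One direction is immediate; for the other, if $E_1,E_2$ are essential classes and $S_i\in E_i$ share a descendant $S_0$, then $S_0\in E_1\cap E_2$ since each $E_i$ is closed, forcing $E_1=E_2$.) Every state in our setting has positive out-degree, since an IFS always permits a continuation, so this criterion applies throughout. Note that this is exactly what the preceding lemma really proves for $N$: every state reaches the unique closed strongly connected $C$, inside which any two states have a common descendant. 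I would then determinize $N$ by the subset construction to a DFA $D$, pass to its reachable part, and observe that minimization does not change the sink strongly connected component structure; so it suffices to prove that every pair of reachable states of $D$ has a common descendant.

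The first reduction is to localize everything near $C$. Let $\mathcal{G}=\{S:\ S\cap C\neq\varnothing\}$. Since $C$ is closed under the transitions of $N$ and every state of $C$ has a successor in $C$, any $S\in\mathcal{G}$ maps into $\mathcal{G}$, so $\mathcal{G}$ is closed under the transitions of $D$. Moreover, starting from $\{\mathbf{i}\}$ one can drive $N$ into $C$, so every reachable state of $D$ eventually enters $\mathcal{G}$ and stays there. Consequently every essential class of $D$ is contained in $\mathcal{G}$, and it is enough to produce a common descendant for an arbitrary pair $S_1,S_2\in\mathcal{G}$.

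For the key step I would return to the underlying input-deterministic transducer $T$ whose output projection is $N$. The plan is to upgrade the ``common descendant'' conclusion of Lemma \ref{lem:UEC} (and of the preceding lemma) to a genuine synchronizing, or reset, word for $T$: in the integer case $F_\omega^{k}$ collapses every positively oriented state to $(q_{\max},+)$ by the same word, and negatively oriented states are sent to $Q^{+}$ in one step, so $T$ is synchronizing to some $c^{*}\in C$; pairwise same-word mergeability then yields a reset word $W$ with $\delta_T(s,W)=c^{*}$ for all states $s$, by the standard theorem that pairwise merging forces a reset word. I would then use $W$ to manufacture common descendants in $D$: given $S_1,S_2\in\mathcal{G}$, choose $c_i\in S_i\cap C$ and drive both branches to $c^{*}$, the goal being to exhibit a single canonical reachable subset $S^{\ast}$ that is a descendant of every member of $\mathcal{G}$, so that $S^{\ast}$ is automatically a common descendant of $S_1$ and $S_2$.

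The main obstacle is exactly this last translation. A reset word $W$ lives in the \emph{input} alphabet of $T$, whereas $D$ is driven by the \emph{output} alphabet, and input-synchronization does not hand us a collapsing output word for free: reading the output of $W$ from the subset $S$ keeps all input-paths with that output, not only the synchronizing one. The crux is therefore to show that determinization creates no genuinely new recurrent behaviour, i.e. that every subset meeting $C$ funnels into one sink component. I expect this to be the only place where the specific arithmetic of the carries must be used rather than a purely automata-theoretic black box: the ordering and contraction underlying $(q_{\max},+)$ in Lemma \ref{lem:UEC}, together with the integer-descendant reduction of the preceding lemma (which replaces a rational carry, periodic of period $c$ and denominator $b$, by an integer descendant via $\sigma^{bc}$), should force the elements of any $S\in\mathcal{G}$ to collapse in $D$ and thereby deliver the canonical common descendant $S^{\ast}$. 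Everything else in the argument is a formal consequence of the preceding lemma and the criterion above.
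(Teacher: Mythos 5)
There is a genuine gap, and you in fact name it yourself: the decisive step of your plan --- that any two subset-states $S_1,S_2\in\mathcal{G}$ of the determinization $D$ have a common descendant --- is never proved. Everything you establish before it (the common-descendant criterion for uniqueness, closure of $\mathcal{G}$ under transitions, reduction to pairs in $\mathcal{G}$) is correct but routine; the reset-word machinery you build does not reach the goal, for the reason you concede: a synchronizing word $W$ for the transducer lives in the \emph{input} alphabet, while $D$ is driven by \emph{output} letters, and reading an output word from a subset retains \emph{all} input paths producing that output, not just the synchronizing one. Worse, since the output automaton is nondeterministic, the subset image of $S$ under a word is the union of the images of its elements, so even output-letter synchronization of singletons would not collapse subsets. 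Two further soft spots: the classical ``pairwise mergeability implies a reset word'' theorem needs pairs merged by the \emph{same} word in a deterministic automaton, whereas Lemma \ref{lem:UEC} and the rational-carry lemma produce common descendants via state-dependent words (the word $\sigma^{b_1c_1}$ depends on the path to the state and on its period and denominator), so synchronization of $T$ is not established in the stated generality $d_i\in\Z_p\cap\Q$; and your assertion that minimization preserves the sink-component structure is unjustified (minimization merges language-equivalent states and can merge essential classes --- which is exactly what one wants here, but it means the claim as stated is not the right invariant).

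The deeper issue is that you are trying to prove a statement stronger than the theorem requires, and stronger than what the paper establishes. The paper never shows that the power-set determinization itself has a unique essential class (its proof explicitly quantifies over ``all possible essential classes in $\overline{\mathcal A}$''). Instead it proves that every essential class $\overline{EC}$ of the determinization recognizes exactly the language $\Ln(EC)$ of the unique essential class of the NDFA: the inclusion $\Ln(EC)\subseteq\overline{\mathcal L}(\overline{EC})$ is formal, and equality is forced by a dimension argument --- Theorem \ref{thm:dimEC} gives $\dimp(\Ln(EC))=\dimp(K)$, and if $\overline{EC}$ accepted a word outside $\Ln(EC)$, appending the corresponding loop to the NDFA's essential class would strictly increase the Perron root of an irreducible matrix, contradicting $\lambda=\overline{\lambda}$. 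Having shown all essential classes are language-equivalent, the paper then \emph{builds a new DFA} by replacing them all with a single minimal representative and redirecting incoming edges. Your route, even if the synchronization gap were filled, would still have to exclude two language-equivalent but mutually unreachable subset-states forming distinct essential classes of $D$ --- a configuration the paper's merge step tolerates and repairs rather than rules out. To fix your proof you would either need to import the measure/spectral-radius argument anyway, or find a genuinely new carry-arithmetic argument collapsing subsets in $\mathcal{G}$, which is precisely the part left as a hope.
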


\begin{proof}
Let $\A$ be the NDFA associated to the $p$-adic self-similar set with the set of states $Q$.
We let $\Ad$ be the deterministic representation of the $p$-adic self-similar set obtained by the power-set method.
We let $\overline Q$ be the set of states of $\Ad$.
We note that for all $\overline q \in \overline Q$ that $\overline q \subseteq Q$.

We will denote by $EC \subseteq Q$ the unique essential class of the non-deterministic representation of $K$ and $\overline{EC} \subseteq \overline{Q}$ an essential class of the deterministic representation.

We will denote by $\Ln(q)$ the language accepted by an automaton $\A$ with initial state $q$. 
For a set of states $A \subseteq Q$ we define $\Ln(A) = \bigcup_{q \in A} \Ln(q)$.
We define the dimension of a language $\mathcal{L} \subset \{0,1,\dots,p-1\}$ as the dimension of the natural projection of $\mathcal{L}$ to the $p$-adic integers. 

Similarly, we will denote by $\Ld({\overline{q}})$ as the language accepted by the automaton $\Ad$ with initial state $\overline q$. For a set of states ${\overline A} \subseteq {\overline Q}$ we define $\Ld({\overline A}) = \bigcup_{{\overline q} \in {\overline A}} \Ld({\overline q})$.

We see that 
    \[\dimp(\Ln(EC)) = \max_{\overline{EC}} \dimp(\Ld(\overline{EC})) = \dimp(K).\]
The first inequality comes from the fact that the dimension of a finite union
    is the maximum of the dimensions within the union.
The second inequality can be seen from either Part \ref{part:zero} or \eqref{part:smaller} or Theorem \ref{thm:dimEC}.

The second maximum is taken over all possible essential classes in $\Ad$.
It follows from the power-set determinization that for each state $q\in EC$, there is a state $\overline{q} \in \overline{EC}$ such that $q\in \overline{q}$. It then holds that $\Ln(EC)\subseteq \Ld(\overline{EC})$, because $\overline{EC}$ contains every walk of $EC$ (labeling-wise) and possibly more. 
As such, for all choices of $\overline{EC}$ we have that $\dimp(K) = \dimp(\Ln(EC)) \leq \dimp(\Ld(\overline{EC})) \leq \dimp(K)$.
Hence $\dimp(\Ld(\overline{EC})) = \dimp(K)$ for all essential classes $\overline{EC}$.
  
If $\Ln(EC)\neq \Ld(\overline{EC})$, then $\overline{EC}$ contains a path $w_1\dots w_n$, starting in some state $\overline{q}$, that is not a prefix of any member of $\Ln(EC)$. Moreover, as $\overline{EC}$ is a loop class, we see that there exists a $u_1 u_2 \dots u_m$ such that $w_1\dots w_nu_1\dots u_m$ begins and ends in the same state $\overline{q}\in \overline{EC}$. 
There exists a $\overline{q'} \in \overline{EC}$ such that $\overline{q'} \cap EC \neq \varnothing$.
As $\overline{EC}$ is a loop class, we may assume that $\overline{q}$ has this property.
Let $q \in \overline{q} \in \overline{EC}$ such that $q \in EC$.
Construct $\A'$ by appending to the digraph of $\A$ a path from $q$ to $q$ that outputs $w_1 \dots w_n u_1 \dots u_m$.
We see that $EC$ is the essential class of $\A'$.
We define $\Ln'$ on $\A'$ in an analogous way to $\Ln$ on $\A$.
Consider the determinization of $\A'$, which we will denote $\Ad'$.
We define $\Ld'$ on $\Ad'$ in an analogous way to $\Ld$ on $\Ad$.
We have $\Ln(EC)\subsetneq \Ln'(EC') \subseteq \Ld'(\overline{EC}') \subseteq \Ld(\overline{EC})$. 

Let $T, T', \overline{T}', \overline{T}$ be the transition matrices for 
    $EC, EC', \overline{EC}', \overline{EC}$ respectively.
Let $\lambda, \lambda', \overline{\lambda}', \overline{\lambda}$ be the 
    dominant eigenvalue value of these transition matrices.
We see that  $\dimp(K) = \frac{\log(\lambda)}{\log(p)} = \frac{\log(\overline{\lambda})}{\log(p)}$
    and $\lambda < \lambda' \leq \overline{\lambda}' \leq \overline{\lambda}$, a contradiction.
Hence $\Ln(EC) = \Ld(\overline{EC})$ for all $\overline{EC}$.

It remains to be shown that we can identify the essential classes and substitute them for one representative without changing the language of the DFA.

Note that for any DFA, there is a minimal one that is unique (up to an isomorphism). Moreover, this minimal DFA can be obtained by iteratively identifying vertices that are \emph{nondistinguishable} (see Hopcroft's algorithm \cite{Hopcroft}). It is important to notice that by following this procedure, to each ``original'' state $q$, there is a state $\overline{q}$ in the next iteration, such that $\mathcal L(q) = \overline{\mathcal L}(\overline{q}).$ Same is then true for minimal DFA.

Therefore, any essential class $EC$ can be replaced by a minimal representative $EC^*$, and for any $p\notin EC, q\in EC$, an edge $p\rightarrow q$  can be replaced by  $p\rightarrow \overline{q}\in EC^*$ without changing the language this DFA recognizes. It is not hard to see that the resulting DFA is again deterministic, which concludes the proof.

By construction, for each $\overline{q} \in \overline{EC}$ there exists  a $\overline{q}^* \in \overline{EC}^*$
    such that  $\Ld(\overline q) = \Ld^*(\overline{q}^*)$.
Thus, an edge going from the non-essential part of the DFA to $\overline q\in \overline{EC}$ can be switched for an edge going to $\overline{q}^*\in \overline{EC}^*$, without changing the language. Since we can do this with all the states of all the essential classes,  the proof is concluded.

\end{proof}

\section{Decimation}
\label{sec:decimation}

In \cite{AbramLagarias14a}, Abram and Lagarias introduced the concept of decimation of a $p$-adic path set fractal.  In particular, they observed that the class of $p$-adic path set fractals was closed under the operation of decimation.

We define the {\em decimation of a sequence by $k$ with offset $j$} as \[\psi_{j,k}(a_0,a_1,\dots) = (a_j,a_{j+k},a_{j+2k},\dots).\]
  In this section we investigate $p$-adic self-similar sets under the process of decimation.  It is often the case that for sufficiently large $k$, and any offset $j$, that the decimation of a $p$-adic self-similar set will result in a language which is maximal.  That is, let  $E$ be the set of digits that occur in the language of the essential class of a $p$-adic self-similar set $K$.  We often have, for sufficiently large $k$, that the dimension is $\log(\#E)/\log(p)$.

\begin{theorem}
\label{thm:coprime}
\StandardClause
Consider the NDFA associated to the self-similar set $K$.
Assume that there exists a $q$ belonging to an essential class $EC$ such that there are two paths starting and ending at $q$ with co-prime length.
Let $E$ be the set of digits that occur in $\Ln(EC)$.
Then for sufficiently large $k$ we have the Hausdorff dimension of $\phi_{j,k}(K)$ is 
    $\log(\#E)/\log(p)$.
\end{theorem}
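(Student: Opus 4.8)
The plan is to combine a Perron--Frobenius primitivity argument extracted from the coprime cycle lengths with an explicit construction that realizes the full shift on the digit set $E$ inside the decimated language, and then to match this lower bound with an upper bound coming from the fact that, up to a strictly lower-dimensional set, every surviving digit of $\psi_{j,k}$ lies in $E$.

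First I would record the consequence of the hypothesis. Because $EC$ is strongly connected and carries two closed walks based at $q$ whose lengths are coprime, the period of $EC$ (the $\gcd$ of all its closed-walk lengths) divides two coprime integers, hence equals $1$; equivalently the adjacency matrix $T$ of $EC$ is primitive. Thus there is a threshold $k_0$ with $T^m>0$ entrywise for all $m\geq k_0$, i.e.\ for all large $k$ there is a walk of length exactly $k-1$ between every ordered pair of states of $EC$. Fix $k$ with $k-1\geq k_0$.

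For the lower bound I would use that a single digit of $\psi_{j,k}$ is the label of the \emph{first} edge of a length-$k$ block, while the remaining $k-1$ edges of the block may follow any walk. For each $e\in E$ fix an edge $\alpha_e$ of $EC$ labelled $e$, from a state $s_e$ to a state $w_e$. Given any finite string $e_{i_1}\cdots e_{i_m}$ over $E$, I build a walk whose $r$-th block starts with $\alpha_{e_{i_r}}$ (so the kept digit is $e_{i_r}$) and then runs a length-$(k-1)$ walk from $w_{e_{i_r}}$ to $s_{e_{i_{r+1}}}$, which exists by primitivity. Since all states are accepting and $EC$ is reachable from the initial state, a fixed prefix $u$ brings us to a state of $EC$ from which this routing can begin, so $\psi_{j,k}(K)$ contains prefixes of the form $u\,e_{i_1}\cdots e_{i_m}$ for every choice of the $e_{i_r}\in E$. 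Distinct strings over $E$ are distinct digit-sequences and hence distinct $p$-adic numbers (standard $p$-adic expansions are unique), giving at least $(\#E)^m$ decimated words of length $m+|u|$ and therefore $\dimp(\psi_{j,k}(K))\geq \log(\#E)/\log p$.

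For the matching upper bound I would use that every infinite accepted path eventually stays in a single strongly connected set, and split $K$ into the essential points (tail in the unique essential class) and the non-essential set $K'$. On essential points all but finitely many surviving digits lie in $E$, so the number of length-$m$ decimated prefixes is $O((\#E)^m)$ and that contribution has dimension at most $\log(\#E)/\log p$. The main obstacle is $K'$: decimation is \emph{not} dimension-monotone (spreading digits apart can remove constraints and raise dimension), so one cannot merely quote $\dimp(K')<\dimp(K)$ from Theorem~\ref{thm:dimEC}. Here I would argue that the decimation of any single strongly connected component is again a deterministic path-set fractal whose spectral radius is bounded by the number of distinct labels it uses, and then leverage the strict spectral gap $\rho(S)<\rho(EC)$ enjoyed by every non-essential component $S$ of a $p$-adic self-similar set (Theorem~\ref{thm:dimEC}\eqref{part:smaller}) to show that, for $k$ large, no non-essential component decimates above $\log(\#E)/\log p$. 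Establishing that this strict domination of the essential class \emph{persists under decimation} is the delicate step; once it is in hand, the two bounds coincide and give $\dimp(\psi_{j,k}(K))=\log(\#E)/\log p$.
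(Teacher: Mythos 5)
Your lower bound coincides, in substance, with the paper's entire proof. The paper fixes the two coprime closed paths $p_1,p_2$ at $q$, and for each $a\in E$ a path $p_{3,a}$ ending in an edge with output $a$ together with a return path $p_{4,a}$, and then schedules words from $\{p_1,p_2\}^*\,p_{3,a}\,p_{4,a}\,\{p_1,p_2\}^*$ so that for all sufficiently large $k$ the digit in a prescribed residue position can be made any element of $E$; this is the numerical-semigroup consequence of coprimality, which is exactly your primitivity of the adjacency matrix in Perron--Frobenius clothing, with your length-$(k-1)$ routing between marked edges playing the role of the paper's padding by $p_1,p_2$. Two small repairs to your version: the paper measures path length by \emph{output} length rather than edge count (maps with $k_i\geq 2$ emit blocks), so your adjacency-matrix argument must be run on the digit-expanded graph in which block edges are subdivided; and small offsets $j$ require discarding a bounded initial segment of kept positions (the paper's $j'$), which costs nothing in dimension.

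Where you genuinely diverge is the upper bound, and the comparison is instructive: the paper's proof simply stops after the lower-bound construction, tacitly treating $\dimp(\psi_{j,k}(K))\leq \log(\#E)/\log p$ as clear, whereas you correctly observe that it is not, since decimation can raise dimension and so Theorem \ref{thm:dimEC}\eqref{part:smaller} does not by itself control $\psi_{j,k}(K')$. However, your proposed repair does not close, and its central lever is mis-aimed. The spectral gap $\rho(S)<\rho(EC)$ for a non-essential loop class $S$ bounds the entropy of $S$, not the number of distinct labels $\#E_S$ it carries, and after decimation by large $k$ it is the label set that governs: indeed, your own lower-bound argument, applied to an \emph{aperiodic} non-essential $S$, shows its decimated contribution can have dimension exactly $\log(\#E_S)/\log p$. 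Since a component can simultaneously have small spectral radius and a large label set (two short cycles through a common state can carry many distinct digits while $\rho(S)$ stays well below $\rho(EC)$), no ``persistence of the spectral gap under decimation'' statement can yield the inequality you need. What the upper bound actually requires is the combinatorial fact that every loop class of the automaton of a $p$-adic self-similar set uses at most $\#E$ distinct digits (for instance, that all such digits already occur in $\Ln(EC)$); neither your sketch nor the paper establishes this, so on this step you have identified a real lacuna in the published argument -- which proves only the lower bound -- but your suggested route would fail to fill it.
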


\begin{remark}
It is worth remarking here that the length of the path is measured by the length of the 
    output, not by the number of edges traversed.
\end{remark}

\begin{corollary}
\StandardClause
Let $T$ be the transition matrix of the essential class of the deterministic 
    automaton.
Assume there exists a $m$ such that $T^m$ is strictly positive.
Let $E$ be the set of digits that occur in $\Ln(EC)$.
Then for sufficiently large $k$ we have the Hausdorff dimension of $\phi_{j,k}(K)$ is 
    $\log(\#E)/\log(p)$.
\end{corollary}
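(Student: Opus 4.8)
The statement to prove is a corollary of Theorem~\ref{thm:coprime}. The theorem requires, for some state $q$ in an essential class $EC$, the existence of two closed paths at $q$ whose output-lengths are coprime. The corollary hypothesizes instead that the transition matrix $T$ of the essential class of the \emph{deterministic} automaton satisfies $T^m > 0$ (strictly positive) for some $m$; that is, $EC$ is primitive/aperiodic. The plan is therefore to reduce the corollary to the theorem by producing, from the primitivity hypothesis, two closed walks of coprime length at a single vertex of $EC$.

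**The plan.** The strategy is to verify the hypothesis of Theorem~\ref{thm:coprime} from the stronger assumption $T^m>0$. First I would fix any state $q$ in the essential class $EC$. Strict positivity of $T^m$ says that for every ordered pair of states $(q_i,q_j)$ there is at least one walk of length exactly $m$ from $q_i$ to $q_j$; in particular there is a closed walk of length $m$ at $q$. But primitivity gives more: since $T^m>0$, the matrix $T^{m+1}=T\cdot T^m$ is also strictly positive (as $T$ is the transition matrix of a strongly connected component, every row of $T$ is nonzero), and indeed $T^\ell>0$ for all $\ell\ge m$. Hence there exist closed walks at $q$ of length $m$ and of length $m+1$, and $\gcd(m,m+1)=1$. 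This furnishes exactly the two coprime-length closed paths required by Theorem~\ref{thm:coprime}.

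**Remark on the length convention.** One subtlety flagged in the paper's remark after Theorem~\ref{thm:coprime} is that ``length'' there means the length of the \emph{output} word, not the number of edges traversed. In the deterministic automaton $\overline{\A}$ built in Section~\ref{sec:self-similar are path set}, each edge carries exactly one output symbol (the blocks of length $k$ having been expanded by inserting $k-1$ auxiliary vertices, as described in Example~\ref{ex:pm}), so the output-length of a walk equals its edge-length. Thus the matrix $T$ indeed counts edges whose lengths coincide with output lengths, and the coprime edge-lengths $m$ and $m+1$ produced above are coprime output-lengths, matching the hypothesis of the theorem precisely. I would note that the theorem is stated for the NDFA while the corollary's hypothesis concerns the deterministic automaton; since Theorem~\ref{thm:EC} identifies the essential classes of the two representations as having the same language and the same dominant eigenvalue, the coprime closed walks in $\overline{EC}$ transfer to the required conclusion about $\Ln(EC)$ and $E$.

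**Main obstacle.** The only real point requiring care is confirming that $T^m>0$ forces $T^\ell>0$ for all $\ell\ge m$, i.e.\ that one can get closed walks of two \emph{consecutive} (hence coprime) lengths rather than merely two coprime lengths by some indirect number-theoretic argument. This is immediate once one observes that $T$, as the adjacency matrix of the strongly connected essential class, has no zero rows, so multiplying a strictly positive matrix by $T$ on either side preserves strict positivity. With that observation in hand the corollary follows directly by invoking Theorem~\ref{thm:coprime} with the pair of closed walks of lengths $m$ and $m+1$.
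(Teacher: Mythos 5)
Your proposal is correct and matches the paper's own proof: both deduce from $T^m>0$ (using that $T$ has no zero rows) that $T^{m+1}>0$, obtain closed walks of the coprime lengths $m$ and $m+1$ at a state of the essential class, and invoke Theorem~\ref{thm:coprime}. Your added remarks on the output-length convention and the NDFA/DFA transfer are careful elaborations of points the paper leaves implicit, not a different route.
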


\begin{proof}
This follows from noting that if $T^m$ is strictly positive, then so is $T^{m+1}$.
As such, for all states in the essential class we have that there exists paths of both lengths $m$ and $m+1$ which both start and end at the same state.  Hence the conditions of Theorem \ref{thm:coprime} are satisfied and the result follows.
\end{proof}

\begin{corollary}\label{coro:decimation}
Let $\F = \{F_i\}$ be a $p$-adic IFS, where each $F_i: x \mapsto p x + d_i$, with $d_i \in \Z_p \cap \Z$.
Let $K$ be the $p$-adic self-similar set associated to $\F$.
Let $E$ be the set of digits that occur in $\Ln(EC)$.
Then for sufficiently large $k$ we have the Hausdorff dimension of $\phi_{j,k}(K)$ is 
    $\log(\#E)/\log(p)$.
\end{corollary}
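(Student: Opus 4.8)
The plan is to reduce Corollary \ref{coro:decimation} to the previous corollary by verifying its hypothesis, namely that some power of the transition matrix $T$ of the essential class is strictly positive (equivalently, that the essential class is \emph{primitive}). First I would invoke Theorem \ref{thm:EC} to guarantee that the DFA recognizing the $p$-adic expansions of $K$ has a unique essential class $EC$; since all maps have the form $F_i(x) = px + d_i$ with $d_i \in \Z \cap \Z_p$, we are squarely in the situation handled earlier, with no sign changes and $k_i = 1$ throughout. This means the states are integers (carries), each state has out-degree $\#\D = n$, and the output block on each edge has length exactly one, so ``length of output'' and ``number of edges'' coincide. The transition matrix $T$ of $EC$ is then a non-negative irreducible matrix.

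The key step is to show that $T$ is primitive, i.e. aperiodic. An irreducible non-negative matrix is primitive if and only if the gcd of the lengths of all closed walks through any fixed state equals $1$. Since $F_i(x) = px + d_i$ maps the carry-state $s$ to $\tfrac1p(s + d_i - D(s+d_i))$, the initial state $0$ (with $d_i = 0$ for the map guaranteed by Theorem \ref{thm:conjugate}, or after conjugation) admits a loop of length $1$: reading the digit-$0$ map sends $0 \mapsto 0$ and outputs $0$. A self-loop is a closed walk of length $1$, which forces the period to be $1$. The only subtlety is ensuring that this self-loop lies inside the essential class $EC$ rather than merely in the transient part. I would argue that the state $0$ is itself essential: in the all-positive, $k_i = 1$ case the reasoning of Lemma \ref{lem:UEC} shows that $0$ (or $q_{\max} = 0$ when all offsets are normalized to be non-negative via Theorem \ref{thm:conjugate}) is a descendant of every state and hence lies in the essential class, so its self-loop certifies aperiodicity of $T$.

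Having established that $T$ is irreducible and has a self-loop, primitivity is immediate, so there exists $m$ with $T^m$ strictly positive, and the hypotheses of the preceding corollary are met. The conclusion that the Hausdorff dimension of $\phi_{j,k}(K)$ equals $\log(\#E)/\log(p)$ for all sufficiently large $k$ then follows directly. The main obstacle I anticipate is the bookkeeping needed to locate a genuine self-loop \emph{inside} the essential class: one must either appeal to Theorem \ref{thm:conjugate} to normalize the offsets so that some $d_i = 0$ (guaranteeing a literal $0 \mapsto 0$ loop on the zero-carry state) and then confirm via the Lemma \ref{lem:UEC} argument that this zero-carry state is essential, or else argue directly that every irreducible component arising from a contraction ratio $p$ with integer offsets must contain a fixed point of some single map. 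Once the self-loop is pinned down within $EC$, the rest of the proof is a short deduction.
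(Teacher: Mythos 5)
Your overall strategy---certify a length-one closed walk inside the essential class and conclude via the coprime-lengths criterion---is the same idea the paper uses, but your primary route to that self-loop does not work. You propose to invoke Theorem \ref{thm:conjugate} to normalize the offsets so that some $d_i = 0$, giving a literal $0 \mapsto 0$ loop at the zero-carry state. The problem is that Theorem \ref{thm:conjugate} replaces $K$ by $L(K)$ for an affine map $L$, and decimation is \emph{not} invariant under translation and scaling: this is exactly the point of Remark \ref{rmk:C}, where $C' = \tfrac12 C + \tfrac14$ has decimations of dimension $1$, $\log 2/\log 3$, or $\log 6/\log 9$ depending on $j$ and $k$, while every decimation of $C$ is again $C$. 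The digit set $E$ of the essential class also changes under conjugation (compare the two DFAs in Example \ref{ex:simplify}), so proving maximal decimation for $L(K)$ says nothing about $\phi_{j,k}(K)$ itself. And without conjugation, the state $0$ need not help: for $p = 3$ with offsets $d_i \in \{3,4\}$ the carry states are $\{0,1\}$, the essential class is $\{1\}$, and $0$ is transient with no self-loop; likewise your parenthetical claim that $q_{\max} = 0$ after normalization fails as soon as some $d_i \geq p$.

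The correct argument is the one you relegate to a fallback and do not carry out. Assume (as in Lemma \ref{lem:UEC}) some $d > 0$ among the offsets, and let $d_{\max}$ be the largest. The carry dynamics of the single map $x \mapsto p x + d_{\max}$, namely $s \mapsto \tfrac1p\bigl(s + d_{\max} - D(s + d_{\max})\bigr)$, is monotone, dominates the dynamics of every other map, and its iteration from $0$ increases and stabilizes after finitely many steps at its fixed point, which is precisely the state $q_{\max}$ of Lemma \ref{lem:UEC}. Thus $(q_{\max},+)$ lies in the (unique) essential class \emph{and} carries a genuine self-loop; loops of lengths $1$ and $2$ are coprime, and since all $k_i = 1$ the output length of a path equals its edge count, so Theorem \ref{thm:coprime} applies directly in the NDFA. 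Note this also sidesteps a secondary gap in your write-up: you route the argument through the corollary requiring $T^m > 0$ for the \emph{DFA} transition matrix, but a self-loop at an NDFA carry state does not automatically yield primitivity of the determinized essential class (the NDFA here can be genuinely nondeterministic, e.g.\ when $d_i \equiv d_j \pmod p$ with $d_i \neq d_j$), and you never address that transfer. Working with Theorem \ref{thm:coprime} at the NDFA level, as the paper does, removes the issue.
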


\begin{remark} 
Note, the shape of the $F_i$ are restricted
\end{remark}

\begin{proof}[Proof of Corollary \ref{coro:decimation}]
As in the proof of Lemma \ref{lem:UEC}, assume that there exists a 
    map $x \mapsto p x + d$ with $d > 0$.
Construct $q_{\max}$ as before.
We see that $(q_{\max}, +)$ is in the essential class, and has a map of length $1$ to itself.
This proves the result.
\end{proof}

\begin{proof}[Proof of Theorem \ref{thm:coprime}]
Let $a \in E$.
Let $q \in EC$ such that there are two paths of co-prime length from $q$ to $q$.
Call these two paths $p_1$ and $p_2$.
There will exist a $q_{2,a}$ such that there is a path from $q$ to $q_{2,a}$ with final output $a$.
Call this path $p_{3,a}$.
In addition, there will exist a path from $q_{2,a}$ to $q$.
Call this path $p_{4,a}$.

For each $a$, considering the set of paths from $q$ to $q$ of the form $\{p_1, p_2\}^* p_{3,a} p_{4,a} \{p_1, p_2\}^*$.
As $p_1$ and $p_2$ are co-prime, we see that for sufficiently large $k$,
    there exists a $j$ (dependent on $n$) such that for all $a$ there
    exists a path of length $k$ and with output $a$ in position $j$.
Further, for sufficiently large $N$ we have that the state is 
    reachable for all $j' \geq N$.
This shows that $\phi_{j+j', k}(K)$ has dimension $\log \#E / \log p$
    as required.
\end{proof}

\begin{example}
\label{ex:not-coprime}
If the conditions of Theorem \ref{thm:coprime} are not met, the result need not follow.
Let $p = 3$.
Consider the two maps $A: x \mapsto 9 x + 3$ and $B: x \mapsto 9 x + 6$.  
The non-deterministic and deterministic automatons for this attractor are shown in Figure \ref{fig:not-coprime}.

\begin{figure}
\centering
\begin{subfigure}[t]{0.4 \textwidth}
        \begin{tikzpicture}[->,>=stealth',shorten >=1pt,auto,node distance=2.8cm,
                    semithick]

  \node[initial,state] (A)              {$0$};
  
  \path (A) edge [loop above] node {$A/01$} (A)
            edge [loop below] node {$B/02$}(A);
\end{tikzpicture}
\caption{Transducer}
\label{fig:not-coprimea}
\end{subfigure}
\begin{subfigure}[t]{0.4 \textwidth}
        \begin{tikzpicture}[->,>=stealth',shorten >=1pt,auto,node distance=2cm,
                    semithick]

  \node[initial,state] (A)              {$0$};
  \node[state]         (A3)  [above of=A]           {a};
  \node[state]         (A6)  [below of =A]            {b};
  
  \path (A) edge [bend left] node {$0$} (A3)
            edge [bend left] node {$0$} (A6)
         (A3) edge [bend left] node {$1$} (A)
         (A6) edge [bend left] node {$2$} (A);
\end{tikzpicture}
\caption{DFA}
\label{fig:not-coprimeb}
\end{subfigure}
\caption{Transducer and DFA for  \ref{ex:not-coprime}}
\label{fig:not-coprime}
\end{figure}
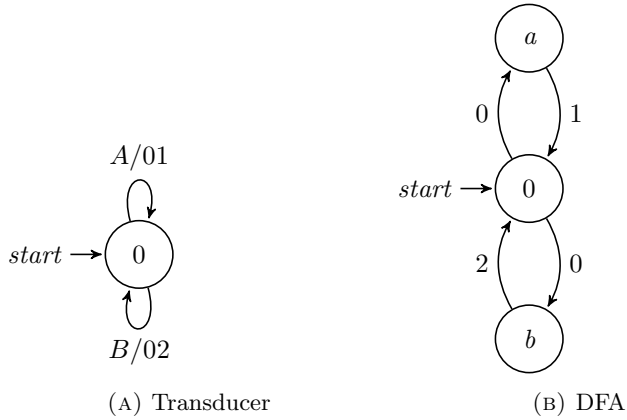

We see that all paths that start and end at the same state are of even length.
Hence we do not have two paths of co-prime length, and the conditions of Theorem \ref{thm:coprime} are not satisfied.
For any even number, the resulting decimation of the set either has dimension $0$ or dimension $\log(2)/\log(3)$, 
    depending on the parity of the offset.  
The dimension of $K$, as well as the decimation by an odd number, independent of the offset, is $\log(2) / \log(9)$.
\end{example}

\begin{example}
\label{ex:decimation}
Consider the self-similar $3$-adic fractal given by the two maps $A: x \mapsto 3x +1$ and $B: x \mapsto 3 x + 5$.
One can quickly compute the transducer (see Figure \ref{fig:quick}).

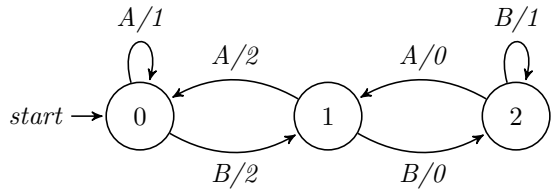
\begin{figure}
\begin{center}
\begin{tikzpicture}[->,>=stealth',shorten >=1pt,auto,node distance=2.5cm,
                    semithick]

  \node[initial,state] (A)                    {$0$};
  \node[state]         (B) [right of=A] {$1$};
  \node[state]         (C) [right of=B] {$2$};

  \path (A) edge [bend right] node[below] {B/2} (B)
            edge [loop above] node {A/1} (A)
        (B) edge [bend right] node[above] {A/2} (A)
            edge [bend right] node[below] {B/0} (C)
        (C) edge [loop above] node {B/1} (C)
            edge [bend right] node[above] {A/0} (B);
\end{tikzpicture}
\end{center}
\caption{Transducer for Example \ref{ex:decimation}}
\label{fig:quick}
\end{figure}

A quick check shows that this is the deterministic FA when removing the input, as there is no non-deterministic output.

The adjacency matrix is \[ T := \begin{bmatrix} 1 & 1& 0 \\ 1 & 0 & 1 \\ 0 & 1 & 1 \end{bmatrix} \] whose dominant eigenvalue is $2$. Hence the dimension of this set is $\dimp(K) = \log(2)/\log(3) \approx 0.63092$.

We see that $T^2>0$, and further that $E = \{0,1, 2\}$ are all the possible labels of the edges.
Hence, by Corollary \ref{coro:decimation}  we see for sufficiently large $k$ that the decimation by $k$, independent of $j$ will
    have $\dimp(\psi_{j,k}(K)) = \log(3)/\log(3) = 1$.
Take for example $k = 3$.
Consider the composition of maps $B\circ B\circ A$.
We see that, regardless of the starting state, that the final output will be $0$ and the final state will be $1$.
Similarly, under the map $B \circ B \circ B$ the final output will be $1$ and 
    under the map $A \circ A \circ B$ the final output will be $2$.
Hence the decimation by $3$, independent of the offset, will have dimension $1$.
\end{example}

\begin{remark}
\label{rmk:C}
It is worth noting that properties of decimation are not invariant under scalar multiplication and translations.  
This is also true in $\R$.
For example, consider the middle third Cantor set $C \subset [0,1]$, written in base $3$.
We see that the decimation of $C$, independent of both $k$ and $j$, will again give $C$, and
    will hence always have dimension $\log(2)/\log(3)$.
Consider instead a scaled shifted version of $C$, namely $C' := \frac{1}{2} C + \frac{1}{4}$.
The directed graph for this path set fractal is given in Figure \ref{fig:C}.

We have that 
\begin{enumerate}
\item If $k \geq 2$, $k$ even and $j$ odd then $\dimr(\psi_{k,j}(C')) = \frac{\log{3}}{\log{3}} = 1$.
\item If $k \geq 2$, $k$ even and $j$ even then $\dimr(\psi_{k,j}(C')) = \frac{\log{2}}{\log{3}}$.
\item If $k \geq 3$, $k$ odd then $\dimr(\psi_{k,j}(C')) = \frac{\log{6}}{\log{9}}$.
\end{enumerate}
\end{remark}

\begin{figure}
\begin{center}
\begin{tikzpicture}[->,>=stealth',shorten >=1pt,auto,node distance=2.5cm,
                    semithick]

  \node[initial,state] (A)                    {};
  \node[state]         (B) [right of=A] {};
  \node[state]         (C) [above of=A] {};
  \node[state]         (D) [below of=A] {};

  \path (A) edge [bend right] node[below] {1} (B)
            edge [bend right] node[right] {0} (C)
            edge [bend right] node[left] {2} (D)
        (B) edge [bend right] node[above] {0} (A)
            edge              node[above] {1} (A)
        (C) edge [bend right] node[left] {1} (A)
        (D) edge [bend right] node[right] {0} (A);

\end{tikzpicture}
\end{center}
\caption{DFA for Remark \ref{rmk:C}}
\label{fig:C}
\end{figure}

\section{\texorpdfstring{$p$}{p}-adic self-similar measures}
\label{sec:measures}

There is a well established literature on self-similar measures $\mu$ on $\R^n$ with support equal to a self-similar set.
See for instance \cite{HareHareMatthews16}.
Let $F_0(x) = \beta x$ and $F_1(x) = \beta x + 1-\beta$.
Let $0 < p < 1$.
A common and classic example is $\mu_\beta$, defined as the 
    unique (up to scaling) measure satisfying 
    \[ \mu_\beta = p \mu_\beta \circ F_0^{-1} + (1-p) \mu_\beta \circ F_1^{-1}.\]
If $p = 1/2$ then this is known as unbiased, otherwise it is known as biased.

If $\beta < 1/2$ then this is a Cantor measure with support on a Cantor set $K$ satisfying 
    $K = F_0(K) \cup F_1(K)$.
If $\beta = 1/2$ and $p = 1/2$ then this is the standard Lebesgue measure restricted to 
    $[0,1]$.
If $1/2 < \beta < 1$ then this is known as a Bernoulli convolution and has been extensively studied \cite{BF, BHM, F3, F1, F4, F2, FLW}.

Of particular interest is the local dimension of a point with respect to a self-similar measure.
\begin{definition}
\label{defn:local dim}
    Let $\mu$ be a measure and $x \in \mathrm{supp}(\mu)$.  
    We define the upper local dimension at $x$ with respect to $\mu$ as
    \[ \overline{\dimlc} \mu(x) = \limsup_{\epsilon \to 0} \frac{\log(\mu(B_\epsilon(x))}{\log(\epsilon)}.\]
Replacing $\limsup$ with $\liminf$ gives the lower local dimension.
If the upper and lower local dimension are equal, then we say that this is the local dimension.
\end{definition}

The goal of this section is to demonstrate that a number of the techniques and results from self-similar measures on $\R$ carry over to $p$-adic self-similar measures in a natural way, with respect to the upper and lower local dimension.

\begin{definition}
    A $p$-adic measure $\mu$ is an additive map from the set of compact open sets $C$ of $\Q_p$ to $\R^+$.
    That is, if $U_1, U_2, \dots, U_n$ is a set of disjoint open compact sets, then 
    \[ \mu\left(\bigcup_{i=1}^n U_i\right) = \sum_{i=1}^n \mu(U_i).\]
\end{definition}

\begin{remark}
It is worth noting that Definition \ref{defn:local dim} is well defined for such measures.
We note that $B_\epsilon(x) = \{y : |x - y|_p < \epsilon\}$ are cylinders of $p$-adic numbers,
\end{remark}

Consider the set of cylinders $a + p^N \Z_p$ in $\Q_p$. We see that these sets are both open and closed, and form a basis of a topology 
    for $\Q_p$.

The most common measure on $\Q_p$ is the Haar measure, defined by $\mu(a + p^N \Z_p) = 1/p^N$.

We can construct a $p$-adic self-similar measures in a similar way to their counterpart on $\R$.
Let $F_0, F_1, \dots, F_n$ be a series of contractions from $\Q_p \to \Q_p$.
Let $p_0, \dots, p_n \in \R$ with $0 < p_i < 1$ and 
     $p_0 + \dots + p_n = 1$.
We define a measure $\mu$ such that 
\begin{equation}
    \mu = p_0 \mu \circ F_0^{-1} + \dots + p_n \mu \circ F_n^{-1}.
    \label{eq:distribution}
\end{equation}
It is further convenient to normalize this so that $\mu(K) = 1$ where $K$ is the attractor of $\{F_1, F_2, \dots, F_n\}$.
We define the (upper, lower) local dimensions for $\mu$ as before.

\begin{example}
The Haar measure restricted to $\Z_p$ is a $p$-adic self-similar measure given by setting $F_i(x) = p x + i$ for $i = 0, 1, \dots, p-1$ and     with equal probabilities $p_i = 1/p$.
In this case all points have local dimension $1$.
\end{example}

\begin{example}
Let $F_0(x) = 3 x +0$ and $F_1(x) = 3 x + 2$ be maps from $\Q_3 \to \Q_3$.
Let $p_0 = p_1 = 1/2$.
Then the measure $\mu = p_0 \mu \circ F_0^{-1} + p_1 \mu \circ F_1^{-1}$ is then natural analog 
    of the Cantor measure in the $3$-adics.
For a cylinder $C := c_0 + c_1 3 + \dots c_{k-1} 3^{k-1} + 3^k \Z_3$ we see that
    $\mu (C) = 1/2^{k}$ if $c_0, c_1, \dots, c_{k-1} \in \{0, 2\}$ and $0$ otherwise.
For all points in the support of $\mu$ we have that the local dimension is $\log(2)/\log(3)$.
\end{example}

For more complicated measures, we often have a range of possible local dimensions, instead of a singleton value.

If $w_1, w_2 \in A^*$ then we define $w_1 w_2$ as the concatenation of $w_1$ with $w_2$, and
    $w_1^k$ as the $k$-fold concatenation of $w_1$ with itself. 
Let $w = a_1 a_2 \dots a_k \in A^*$.
We define $F_w = F_{a_1} \circ \dots \circ F_{a_k}$.
Associated to each $F_i$ is a probability $p_i$ such that $\sum p_i = 1$.
We define $p_w = p_{a_1} \dots p_{a_k}$.

Assume that the $p$-adic IFS (and hence measure) is 
    defined by equicontractive maps $F_i$.
That is, all maps $F_i = p x + d_i$ for some $d_i \in \Z_p \cap \Q$.
A more complicated construction is possible for non-equicontractive by adapting the technique of \cite{HareHareSimms18}.

Let $K$ be the associated $p$-adic self-similar set.
We know that $K = \bigcup F_i (K)$.
It is not difficult to show that for any fixed $k$ that $K = \bigcup_{w \in A^k} F_w(K)$.

We see that if $w \in A^k$ then $F_w(K) \subseteq b + p^k \Z_p$ for some $b = b_1 b_2 \dots \in \Z_p$.
Further, as the centre of every ball of the form $b + p^k \Z_p$ depends 
    only on the first $k$ terms of $b$, we may assume $b = b_1 b_2 \dots b_{k}$. 
This gives us that $F_w(K) \cap b + p^k \Z_p \neq \varnothing$ if and only if $F_w(K) \subseteq b + p^k \Z_p$.
This greatly simplifies our analysis.
By a recursive application of equation \eqref{eq:distribution}, using a proof similar to \cite[Lemma 3.5]{HareHareMatthews16} or \cite{F4}, we have that
\begin{equation} 
\mu \left( (b+p^k \Z_p) \cap K \right) =
    \sum_{w \in A^k, F_w(K) \cap (b+p^k \Z_p) \neq \varnothing} p_w. 
    \label{eq:recursive}
\end{equation}

We will show that there exists a finite set of matrices, such that  the measure of $a + p^k \Z_p$ is the sum of the entries of the product of $k$ of these matrices. The product is explicitly determined by $a$.

With $\mu$, and $p_i$ defined as above,
\[ \mu(a + p^k \Z_p) = \sum_{c=0}^{p-1} \mu (a + c \cdot p^k + p^{k+1} \Z_p). \]

Recall when we constructed the non-deterministic automaton that we labelled the states based on the  remainder.
Then when we constructed our deterministic (albeit not-necessarily minimal) automaton, we labelled the   states as subsets of the set of remainders.
Following \cite[Section 3.2]{HareHareMatthews16} we use these subsets of the set of remainders for the start state and end state of a transition to index the 
   rows and columns of the transitions matrices.

Consider two states in the deterministic automaton, say $q_1 := \{r_1,  \dots, r_k\}$ and 
    and $q_2 := \{r_1',  \dots, r_{k'}'\}$ such that there is a transition from 
    $q_1$ to $q_2$ by output $a \in \{0,1, \dots, p-1\}$.
We will construct a $k \times k'$ matrix $T := T(q_1, q_2)$.
We define 
\[ T[i,j] := \sum_{k \in K(i,j)} p_k \]
where the $K(i,j)$ is the set of all transitions $S_k$ from states $r_i$ to $r_j'$ and with
    output of $a$.
If this set is empty, then the sum is $0$.

It is clear by construction that if $q_1 \to q_2 \to q_3$ then the transitions 
    matrices $T(q_1, q_2)$ and $T(q_2, q_3)$ have compatible dimensions for 
    matrix multiplication.
As both the NDFA and DFA contain a finite number of states, there are a finite number
    of matrices.
Such a measure is said to satisfy the {\em finite type condition}.

We will next show that the measure of a cylinder $a + p^k \Z_p$ can be 
    determined by these matrix multiplication.

In a deterministic automaton, there is at most one edge associated to a particular output.
As such, we see that if for some $w = a_1 a_2 \dots a_k$ we have output 
    $b_0 b_1 \dots b_{k-1}$ we can determine exactly what the final state is in the DFA.
We further see that this final state depends only on 
    $b_0 b_1 \dots b_{k-1}$.
Let $S_{k-1}$ be the state associated to the output $b_0 b_1 \dots b_{k-2}$ and 
    $S_k$ the state associated to the output $b_0 b_1 \dots b_{k-1}$.
We are interested in the transition from $S_{k-1}$ to $S_k$.
Assume that $F_w(K) \subseteq b + p^k \Z_p$.
If $w = a_1 a_2 \dots a_k$ we define $w^- = a_1 a_2 \dots a_{k-1}$.
We see that in the non-deterministic automaton that $F_{w^-}$ is associated to a particular 
    carry state, say $r_i \in S_{k-1}$, and $S_{w}$ is associate
    to a carry state $r_j' \in S_k$.  
The weight contributed by this carry state is $p_{a_k}$ where $w = a_1 a_2 \dots a_k$.
Hence it is $p_{a_k}$ times the weight associated to the carry state $r_i$ in 
    $b_0 b_1 \dots b_{k-2}$.

\begin{example}
\label{ex:measure}
Consider a measure $\mu$ with support the $3$-adic self-similar set of Example \ref{ex:3adic}.  That is:
\begin{itemize}
    \item $A: x \mapsto 3 x$ with probability $p_0$
    \item $B: x \mapsto 3 x + 1$ with probability $p_1$
    \item $C: x \mapsto 3 x + 3$ with probability $p_3$.
\end{itemize}

See Figure \ref{fig:measure} for a visual representation of the transducer with associated probabilities, and the DFA with the associated
    transition matrices.

The local dimension at a point $x = a_0 a_1 a_2 a_3 \dots$ can be computed by using the norm of the matrix product, normalized by the length, as before.
For example, the local dimension at $-1/8 = [\overline{1, 0}]$ would be given by
\begin{align*}
    \dimlc \mu(x)
    &= \lim_{n\to \infty} \frac{\log\left( \left \Vert
    \begin{bmatrix} p_1 \end{bmatrix}
    \begin{bmatrix}p_0 & p_1 \end{bmatrix}
    \left(
    \begin{bmatrix} p_1 & 0 \\ p_0 & p_2 \end{bmatrix}
    \begin{bmatrix} p_0 & p_2 \\ 0 & 0 \end{bmatrix} \right)^{(n-2)/2} \right\Vert \right ) }{\log(1/3^n)} \\
    & = -\frac{\log(p_0 (p_1 + p_2))}{2 \log 3}.
\end{align*}

\begin{figure}
\centering
\begin{subfigure}[t]{0.45 \textwidth}
\begin{tikzpicture}[->,>=stealth',shorten >=1pt,auto,node distance=2.8cm,
                    semithick]

  \node[initial,state] (A)                    {$0$};
  \node[state]         (B) [below of=A] {$1$};

  \path (A) edge [bend right=50]  node[left] {$C/0/p_2$} (B)
            edge [loop above] node {$A/0/p_0$} (A)
            edge [loop right] node {$B/1/p_1$} (A)

        (B) edge [bend left=20] node[right] {$A/1/p_0$} (A)
            edge [bend right=70] node[right] {$B/2/p_1$} (A)
            edge [loop right] node {$C/1/p_2$} (B);

\end{tikzpicture}
\caption{Transducer}
\label{fig:measurea}
\end{subfigure}
\begin{subfigure}[t]{0.45 \textwidth}
\begin{tikzpicture}[->,>=stealth',shorten >=1pt,auto,node distance=3.5cm,
                    semithick]

  \node[initial,state] (A)                    {$\{0\}$};
  \node[state]         (B) [below of= A] {$\{0,1\}$};

  \path (A) edge [bend left] node {$\begin{bmatrix}p_0 & p_1 \end{bmatrix}$,  0} (B)
            edge [loop right] node {$\begin{bmatrix} p_1 \end{bmatrix}$,  1} (A)
        (B) edge [loop right] node {$\begin{bmatrix} p_0 & p_2 \\ 0 & 0 \end{bmatrix}$,  0} (B)
            edge [loop left] node {$\begin{bmatrix} p_1 & 0 \\ p_0 & p_2 \end{bmatrix}$,  1} (B)
            edge [bend left] node {$\begin{bmatrix} 0 \\ p_1 \end{bmatrix}$,  2} (A);
\end{tikzpicture}
\caption{DFA}
\end{subfigure}
\caption{Transducer and DFA with associated probabilities and transition matrices for Example \ref{ex:measure}}
\label{fig:measure}
\end{figure}
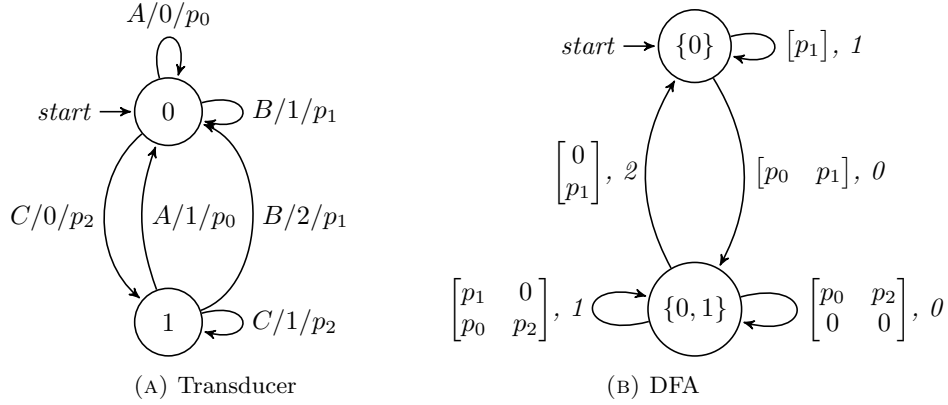

\end{example}

As the above example shows, the methods and technique from \cite{F3, F1, F4, F2, HareHareMatthews16} can be extended to $p$-adic self-similar measures in a natural way.
The upper and lower local dimension of points in $\Z_p$ can be computed using 
    similar techniques.
In $\R$ the computation of the upper local dimensions is complicated by 
    the fact $\limsup \log(\mu [x-\epsilon, x])/\log(\epsilon)$ need not equal
    $\limsup \log(\mu[x,x+\epsilon])/\log(\epsilon)$.  
A similar comment holds for (lower) local dimensions.
This is not an issue in the $p$-adic case, as all points are the centre of the cylinder
    in which they are contained.

Adapting \cite{HareHareNg18} and Theorem \ref{thm:dimEC} we get
\begin{prop}
\StandardClause
Let $\mu$ be a self-similar measure defined on $K$ as in \ref{eq:distribution}
Let $K' \subseteq K$ be the set of points outside of the essential classes. 
Then the following are true.
\begin{enumerate}
\item $0 < \mu < \infty$  
\item $\mu(K')=0$.        
\end{enumerate}
\end{prop}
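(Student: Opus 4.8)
The plan is to handle the two parts separately, treating part~(1) as essentially a normalization statement and devoting the real work to part~(2).

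For part~(1), I would first record that the defining relation \eqref{eq:distribution} has a unique solution among probability measures supported on the compact set $K\subseteq\Q_p$. This is the standard Hutchinson argument: the operator $\nu\mapsto\sum_i p_i\,\nu\circ F_i^{-1}$ is a contraction on the complete metric space of Borel probability measures on $K$ equipped with the Hutchinson (Wasserstein) metric, since each $F_i$ is a strict contraction of $\Q_p$. Its unique fixed point is $\mu$, and after the normalization $\mu(K)=1$ we immediately obtain $0<\mu(K)<\infty$, which is the content of part~(1).

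For part~(2) I would adapt the recursion used in the proof of Theorem~\ref{thm:dimEC}, replacing the Hausdorff measure $\mu_{K,p}$ throughout by the self-similar measure $\mu$. Let $N$ be the number of states of the DFA recognizing the $p$-adic expansions of $K$, which by Theorem~\ref{thm:EC} has a unique essential class $\overline{EC}$ reachable from every state in at most $N$ output steps. Covering $K'$ by the cylinders $U_n=a+p^n\Z_p$ that meet $K'$, additivity of $\mu$ gives $\mu(U_n)=\sum_V\mu(V)$, where $V$ ranges over the sub-cylinders of $U_n$ of level $n+N$. Writing $a_n:=\sum_{U_n\cap K'\neq\varnothing}\mu(U_n)\ge\mu(K')$, the whole argument reduces to a single inequality: there is a constant $\epsilon>0$, depending only on $\F$ and the weights $p_i$, such that for every $U_n$ meeting $K'$ the sub-cylinders $V$ with $V\cap K'=\varnothing$ carry total mass at least $\epsilon\,\mu(U_n)$. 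Granting this, the bad sub-cylinders satisfy $\sum_{V\cap K'\neq\varnothing}\mu(V)\le(1-\epsilon)\mu(U_n)$, hence $a_{n+N}\le(1-\epsilon)a_n$; iterating gives $a_{jN}\le(1-\epsilon)^j a_0\to0$, and since $a_{jN}\ge\mu(K')$ we conclude $\mu(K')=0$.

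The one genuinely new point, and the main obstacle, is the \emph{relative} lower bound $\epsilon\,\mu(U_n)$ on the good mass. For the Hausdorff measure of Theorem~\ref{thm:dimEC} this was an \emph{absolute} bound $p^{-N}\mu_{K,p}(K)$, obtained by planting a full scaled copy of $K$ in the good sub-cylinder; for a self-similar measure no such absolute bound is available, so I must argue relatively. The clean way to organize this is to realize $\mu=\pi_*\tilde\mu$, where $\tilde\mu$ is the Bernoulli measure with weights $p_i$ on the address space $\{1,\dots,n\}^{\N}$ and $\pi$ is the address map. Under $\tilde\mu$ the sequence of carry states of the non-deterministic transducer of Theorem~\ref{thm:main} is a finite Markov chain, and the unique-essential-class statements (Lemma~\ref{lem:UEC} and its successor) say exactly that this chain has a unique recurrent class, namely the essential class. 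Hence from every carry state the essential class is reached within $N$ steps along an input word of $\tilde\mu$-probability at least $(\min_i p_i)^{N}$, and summing these routing contributions over the carry states supporting the mass of $U_n$, weighted by that mass, yields the desired uniform $\epsilon=(\min_i p_i)^{N}$. The delicate step, which I would need to justify carefully, is the compatibility between the \emph{non-deterministic} carry dynamics (through which $\mu$ is naturally expressed) and the \emph{deterministic} automaton (through which essentialness of a point is defined): I must verify that routing a carry state into the non-deterministic essential class forces the corresponding deterministic sub-cylinder to lie in $\overline{EC}$, equivalently that a $\tilde\mu$-generic address absorbed into the recurrent carry class maps to an essential point. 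I expect this to follow from the identity $\dimp(\Ln(EC))=\dimp(\Ld(\overline{EC}))$ established in the proof of Theorem~\ref{thm:EC}, together with the strict dimension drop $\dimp(K')<\dimp(K)$ of Theorem~\ref{thm:dimEC}\eqref{part:smaller}, but it is exactly the place where the $p$-adic and automata-theoretic structure must be combined with care.
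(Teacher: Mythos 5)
Your plan is the paper's intended one: the paper in fact gives no proof of this proposition, saying only that it follows by ``adapting \cite{HareHareNg18} and Theorem \ref{thm:dimEC}'', and your part (1) (Hutchinson fixed point plus normalization) together with the recursion $a_{n+N}\leq(1-\epsilon)a_n$ is exactly that adaptation, worked out in more detail than the paper records. You have also put your finger on precisely the point where the adaptation is not routine: for the Hausdorff measure the good sub-cylinder carries an absolute mass $p^{-N}\mu_{K,p}(U_n)$ via a scaled copy of $K$, whereas for $\mu$ the mass must be routed through the carry states, and essentialness of a point is a property of the \emph{deterministic} automaton while $\mu$ lives naturally on the \emph{non-deterministic} one.

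The step you flag is, as it stands, a genuine gap, but it closes along the lines you anticipate. The obstruction is that absorbing the carry chain into the non-deterministic essential class $EC$ does not by itself make the point essential: a power-set state containing a state of $EC$ need not lie in $\overline{EC}$, and a DFA path can in principle remain forever in a non-essential maximal loop class $L$ even though each of its states contains a state of $EC$. To rule this out, use that the transducer is deterministic in the \emph{input} alphabet, so under the Bernoulli measure $\tilde\mu$ the carry states form a finite Markov chain whose unique recurrent class is $EC$ (Lemma \ref{lem:UEC} and its successor). Pick $q^*\in EC$ with $\dimp(\Ln(q^*))=\dimp(\Ln(EC))=\dimp(K)$; by recurrence $q^*$ is visited infinitely often, and at successive visits every finite input word $u$ is executed almost surely (independent trials with probability $p_u>0$). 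Note also that every infinite DFA path is eventually confined to a single maximal loop class, since all states between consecutive visits to an infinitely-recurring state lie in its loop class. If the digit path were trapped in a non-essential class $L$ with positive $\tilde\mu$-probability, then almost surely on that event $L$ would have to read, from some of its states, the output of every input word executed from $q^*$; hence the language of infinite paths inside $L$ would have dimension at least $\dimp(\Ln(q^*))=\dimp(K)$. But every point whose DFA path stays in $L\neq\overline{EC}$ forever belongs to $K'$, so this forces $\dimp(K')\geq\dimp(K)$, contradicting Theorem \ref{thm:dimEC}\eqref{part:smaller} --- exactly the two ingredients you predicted. With this lemma, $\tilde\mu$-a.e.\ address projects to an essential point, so $\mu(K')=\tilde\mu(\pi^{-1}(K'))=0$ follows directly, and your $\epsilon=(\min_i p_i)^N$ recursion is then a second, equivalent way to finish.
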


We say that $\mu$ has the positive row property if every row of every transition matrix has a non-zero entry.  
We say that a point is periodic if it is $p$-adic representation is eventually periodic.
We say that a point $x$ is a positive periodic point if it is periodic and the 
    transition matrix associated to the period is has strictly positive entries.

Adapting the proofs of \cite{HareHareNg18} we get the following results for the $p$-adic self-similar measures.

\begin{theorem}[Analogous to Theorem 3.12 of \cite{HareHareNg18}]
Suppose $\mu$ is a $p$-adic self-similar measure satisfying the positive row property. Then the set of lower local dimensions of $\mu$ at essential, positive, periodic points is dense in the set of all local dimensions of $\mu$ at essential points. A similar statement holds for the (upper) local dimensions.
\end{theorem}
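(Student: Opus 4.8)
Suppose $\mu$ is a $p$-adic self-similar measure satisfying the positive row property. Then the set of lower local dimensions of $\mu$ at essential, positive, periodic points is dense in the set of all local dimensions of $\mu$ at essential points.

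The plan is to translate everything into the language of the transition matrices constructed in this section and then to mimic the real-variable argument of \cite{HareHareNg18}. First I would record the basic dictionary: for an essential point $x$ with $p$-adic digits $a_0 a_1 a_2 \cdots$, equation \eqref{eq:recursive} together with the matrix construction of Example \ref{ex:measure} expresses $\mu(x + p^n\Z_p)$ as the sum of the entries of a product $M_1 M_2 \cdots M_n$ of transition matrices determined by $a_0,\dots,a_{n-1}$. Writing $\|\cdot\|$ for the sum-of-entries norm (which is submultiplicative and, since there are finitely many states, comparable to entrywise control), this yields
\begin{equation*}
\underline{\dimlc}\,\mu(x) = \liminf_{n\to\infty} \frac{-\log\|M_1\cdots M_n\|}{n\log p}, \qquad \overline{\dimlc}\,\mu(x) = \limsup_{n\to\infty} \frac{-\log\|M_1\cdots M_n\|}{n\log p}.
\end{equation*}
By Theorem \ref{thm:EC} there is a unique essential class $E$, which is strongly connected, so for an essential point the matrices $M_i$ are eventually all transitions inside $E$.

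Next I would pin down the values attained at positive periodic points. If $x$ is eventually periodic with period word of length $\ell$ whose associated period matrix $P$ is strictly positive, then Perron--Frobenius applies: $\|P^{j}\|$ grows like $\rho(P)^{j}$, the preperiod contributes only a bounded multiplicative factor, and hence the local dimension \emph{exists} and equals $-\log\rho(P)/(\ell\log p)$. Thus the set of lower local dimensions at essential, positive, periodic points is exactly $\{-\log\rho(P)/(\ell\log p)\}$ as $P$ ranges over strictly positive loop matrices in $E$. Two preliminary observations make these plentiful: the positive row property is preserved under products (a positive entry in each row of $A$ composes with a positive entry in the corresponding row of $B$ to give one in $AB$), and $E$ is strongly connected, so there is a fixed loop $R$ at a chosen recurrent state $q$ whose matrix is strictly positive; composing any loop $Q$ at $q$ with $R$ then yields a strictly positive loop, i.e. a bona fide positive periodic point.

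With the dictionary in place I would run the approximation. Given an essential point $x$ with $\underline{\dimlc}\,\mu(x)=\alpha$, I would choose a subsequence $n_k\to\infty$ realizing the liminf and, by finiteness of $E$, refine it so that the path sits at one fixed recurrent state $q$ at each time $n_k$ and has returned to $q$ at some time $m_k$ with $m_k/n_k\to 0$. The loop matrix $P_k=M_{m_k+1}\cdots M_{n_k}$ then satisfies $-\log\|P_k\|/(n_k\log p)\to\alpha$. Splicing in the fixed positive loop $R$ produces a strictly positive period matrix $P_kR$ and a genuine positive periodic point $\hat x_k$, and I would check that the bounded preperiod and the bounded splice alter the relevant logarithms by only $o(n_k)$. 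The conclusion $\dimlc\,\mu(\hat x_k)=-\log\rho(P_kR)/(\ell_k\log p)\to\alpha$ then gives density; the upper-local-dimension statement follows by the symmetric argument using $\limsup$.

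The step I expect to be the main obstacle is exactly this last convergence, because it compares a \emph{spectral radius} $\rho(P_kR)$ against the \emph{norm} $\|P_k\|$ defining $\alpha$, and for a single extracted loop these can differ on the exponential scale (a long product may have large norm yet small spectral radius). Directly periodizing one prefix is therefore not enough. I would resolve this by realizing target values through high powers of \emph{fixed} small positive loops, for which $\|Q^{j}\|\sim c\,\rho(Q)^{j}$ makes norm and spectral radius agree exponentially, and then obtain density among the resulting discrete set of values $-\log\rho(Q)/(\ell_Q\log p)$ by concatenation: combining positive loops $Q_1,Q_2$ at $q$ as $Q_1^{a}Q_2^{b}$ preserves positivity, and letting $a/b$ vary while $a,b\to\infty$ sweeps the spectral exponent between the endpoint values $-\log\rho(Q_i)/(\ell_i\log p)$. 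Establishing that these swept values are dense in the full set of local dimensions at essential points---equivalently, that this set is the interval bounded by the extremal loop values---is the technical heart, and is where the detailed estimates of \cite{HareHareNg18} would be adapted.
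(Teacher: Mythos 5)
Your set-up and your first line of attack are exactly the intended argument: the paper gives no details for this theorem, deferring entirely to an adaptation of Theorem 3.12 of \cite{HareHareNg18}, and that proof is precisely what you sketch before your final paragraph --- extract a loop $P_k$ between two visits to a fixed recurrent state along a norm-subsequence realizing the liminf, splice in a fixed strictly positive loop $R$, and apply Perron--Frobenius to the resulting positive period matrix. The genuine gap is that you then wrongly declare this approach insufficient and replace it with a strategy that does not prove the statement. The obstacle you flag --- that $\rho(P_kR)$ could be exponentially smaller than $\|P_k\|$ --- evaporates once the \emph{fixed} positive block is spliced in: for non-negative $A,B$ and strictly positive $J$ one has $\|AJB\| \geq c_J\,\|A\|\,\|B\|$ with $c_J = \min_{j,k}J_{jk}$, simply because $(AJB)_{il} = \sum_{j,k}A_{ij}J_{jk}B_{kl} \geq c_J\bigl(\sum_j A_{ij}\bigr)\bigl(\sum_k B_{kl}\bigr)$ in the sum-of-entries norm. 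Applying this inductively to $(P_kR)^m$ gives $\|(P_kR)^m\| \geq (c_R\|P_k\|)^{m-1}\|P_kR\|$, hence $\rho(P_kR) \geq c_R\|P_k\|$, while trivially $\rho(P_kR) \leq \|P_k\|\,\|R\|$. So $\rho(P_kR) \asymp \|P_k\|$ with constants independent of $k$; since the period lengths tend to infinity, the constants are subexponential and $\dimlc\mu(\hat x_k) \to \alpha$ as you originally wanted. (The same inequality, with the positive block placed at the junction, shows the preperiod contributes only a bounded multiplicative factor --- this is exactly why positivity of the period is built into the definition of positive periodic points --- and the standard positive-row-property bound $\|AB\| \geq c\|A\|$ lets you compare $\|P_k\|$ with $\|M_1\cdots M_{n_k}\|$ after discarding the bounded prefix.) Your intuition that ``a long product may have large norm yet small spectral radius'' is correct for a bare loop $P_k$, but not for $P_kR$ with $R$ fixed and strictly positive.

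The replacement strategy in your last paragraph is, moreover, circular. You propose to realize values via fixed positive loops $Q_1^aQ_2^b$ and then to show these values are dense in ``the full set of local dimensions at essential points --- equivalently, that this set is the interval bounded by the extremal loop values.'' That interval characterization is the analogue of Corollary 3.15 of \cite{HareHareNg18}, which is deduced \emph{from} Theorems 3.12--3.14, i.e.\ from the very statement you are trying to prove; and the sweeping claim that $-\log\rho(Q_1^aQ_2^b)/(\ell\log p)$ interpolates between the endpoint values is essentially the content of Theorem 3.14, which you assert rather than prove. Even granting it, density of the swept values in an interval says nothing about density in the set of \emph{attained} local dimensions unless that set is already known to equal the interval. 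The fix is simple: delete the last paragraph, keep your splicing argument, and supply the inequality $\|AJB\| \geq c_J\|A\|\,\|B\|$ above. One further point deserves an explicit justification rather than the one-line appeal you give: the existence of a strictly positive loop $R$ in the essential class does not follow from the positive row property being preserved under products together with strong connectivity alone (an irreducible system can be imprimitive); this existence is part of what must be checked, or assumed, when adapting \cite{HareHareNg18}.
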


\begin{theorem}[Analogous to Theorem 3.13 of \cite{HareHareNg18}]
Suppose $\mu$ is a $p$-adic self-similar measure satisfying the positive row property. Assume $(x_n)$ are essential, positive, periodic points. There there is an essential point $x$ such that
\[ \dimlc \mu(x) = \limsup \dimlc \mu(x_n) \]
and
\[ \dimlc \mu(x) = \liminf \dimlc \mu(x_n). \]
\end{theorem}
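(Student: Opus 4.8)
The plan is to construct the essential point $x$ by splicing together arbitrarily long runs of the periodic blocks of suitably chosen $x_n$, and to read off its upper and lower local dimensions from the matrix-product description of $\mu$ developed after Example~\ref{ex:measure}. Recall that for a point with $p$-adic digit string $a_0 a_1 \cdots$ the measure of the radius-$p^{-k}$ ball about it is the sum of the entries of a product $T_1 T_2 \cdots T_k$ of transition matrices, one per digit, so that
\[ \overline{\dimlc}\mu(x) = \limsup_{k\to\infty} \frac{-\log\|T_1\cdots T_k\|}{k\log p}, \qquad \underline{\dimlc}\mu(x) = \liminf_{k\to\infty} \frac{-\log\|T_1\cdots T_k\|}{k\log p}, \]
where $\|\cdot\|$ denotes the sum of all entries. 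For a positive periodic point with period word of length $\ell$ and strictly positive period matrix $P$, Perron--Frobenius gives $\tfrac1m\log\|P^m\|\to\log\rho(P)$, so the limit exists and equals $-\log\rho(P)/(\ell\log p)$; in particular each $\dimlc\mu(x_n)$ is well defined.

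First I would set $L=\limsup_n \dimlc\mu(x_n)$ and $\ell = \liminf_n\dimlc\mu(x_n)$ and extract a subsequence $(y_j)$ with $\dimlc\mu(y_j)\to L$ and a subsequence $(z_j)$ with $\dimlc\mu(z_j)\to\ell$. Because each is a positive periodic point, the states visited along one period form a strongly connected set contained in an essential class. By Theorem~\ref{thm:EC} there is a single essential class, so I can route between the cycle of any $y_j$ and the cycle of any $z_j$ by a bounded connecting word that stays in the essential class. Next I would build $x$ as an alternating concatenation
\[ x = w_0\, Y_1^{N_1}\, c_1\, Z_1^{M_1}\, c_2\, Y_2^{N_2}\, c_3\, Z_2^{M_2}\cdots, \]
where $Y_j^{N_j}$ denotes $N_j$ repetitions of the period word of $y_j$ (similarly $Z_j^{M_j}$), $w_0$ is a prefix from the initial state into the essential class, and the $c_i$ are the bounded connectors. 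Since every block and connector lies in the essential class, the path of $x$ is eventually in the essential class, so $x$ is an essential point. The counts $N_j,M_j$ are chosen to grow fast enough that at the digit-scale $k$ ending the $j$-th $Y$-block the normalized log-measure is within $1/j$ of $\dimlc\mu(y_j)$, and at the end of the $j$-th $Z$-block within $1/j$ of $\dimlc\mu(z_j)$.

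The key estimate, and the step I expect to be the main obstacle, is controlling the accumulated matrix norm across block boundaries: I must show that a long run $P^N$ of a strictly positive period matrix forces the cumulative norm to be governed, up to a factor bounded independently of $N$, by $\rho(P)^N$, so that the contributions of all earlier blocks and connectors wash out after dividing by $k$. This is exactly where strict positivity of the period matrices and the positive row property are used: for strictly positive matrices the product norm is almost multiplicative (equivalently, products contract in the Hilbert projective metric), giving $\|A\,P^N\|\asymp\|A\|\,\rho(P)^N$ with constants depending only on the finitely many matrices involved. The same positivity keeps the normalized ratio from overshooting $L$ or undershooting $\ell$ at intermediate scales, since within a block the ratio moves (up to $o(1)$) from the previous block's value toward the current block's value. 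Granting this, the end-of-$Y$-block scales produce a subsequence of ratios tending to $L$ and the end-of-$Z$-block scales a subsequence tending to $\ell$, while no scale exceeds $L+o(1)$ or falls below $\ell-o(1)$; hence $\overline{\dimlc}\mu(x)=L$ and $\underline{\dimlc}\mu(x)=\ell$. The argument parallels the proof of Theorem~3.13 of \cite{HareHareNg18}, with cylinders in $\Q_p$ replacing real intervals and with the simplification, noted above, that every $p$-adic point is the centre of its cylinder, so one-sided local dimensions do not arise.
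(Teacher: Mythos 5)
Your proposal is correct and takes essentially the same route as the paper, which gives no independent proof but simply invokes the argument of Theorem 3.13 of \cite{HareHareNg18}: your splicing of long runs of the period words of subsequences realizing the limsup and liminf, joined by bounded connectors inside the unique essential class (Theorem \ref{thm:EC}), together with the almost-multiplicativity $\|A\,P^N\|\asymp\|A\|\,\rho(P)^N$ for the strictly positive period matrices and the positive row property to prevent norm collapse across connectors, is exactly that adaptation, including the correct observation that the one-sided complications of the real case disappear because every $p$-adic point is the centre of its cylinder. Note also that you have silently repaired a typographical slip in the statement (both displayed equalities are printed with $\dimlc$): your reading, that the upper local dimension of $x$ equals $\limsup \dimlc \mu(x_n)$ and the lower local dimension equals $\liminf \dimlc \mu(x_n)$, is the intended one from \cite{HareHareNg18}.
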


\begin{theorem}[Analogous to Theorem 3.14 of \cite{HareHareNg18}]
Suppose $\mu$ is a $p$-adic self-similar measure satisfying the positive row property. Let $y,z$ be essential, positive, periodic points. Then the set of local dimensions of $\mu$ at truly essential points contains the closed interval with endpoints $\dimlc \mu(y)$ and $\dimlc \mu(z)$.
\end{theorem}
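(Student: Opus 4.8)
The plan is to realize every value in the interval as a genuine two-sided local dimension by interpolating between the periodic patterns of $y$ and $z$. Throughout I use the matrix-product description of $\mu$ developed before Example \ref{ex:measure}: if $x \in K$ has $p$-adic expansion $a_0 a_1 a_2 \cdots$, then reading this expansion through the deterministic automaton selects a sequence of transition matrices $T_1, T_2, \dots$ together with an initial row vector $v$, and
\[ \dimlc \mu(x) = \lim_{n\to\infty} \frac{\log\bigl\Vert v\, T_1 T_2 \cdots T_n\bigr\Vert}{-n\log p}, \]
whenever the limit exists. For a positive periodic point the period selects a single strictly positive (hence primitive) matrix $M$, so $\Vert M^j\Vert \asymp \rho(M)^j$, and if $\ell$ is the period length then, writing $d_y = \dimlc\mu(y)$ and $d_z = \dimlc\mu(z)$,
\[ d_y = -\frac{\log\rho(M_y)}{\ell_y \log p}, \qquad d_z = -\frac{\log\rho(M_z)}{\ell_z\log p}, \]
with $M_y, M_z$ the strictly positive period matrices of $y$ and $z$.

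First I would record the connectivity we need. Since $y$ and $z$ are essential and, by Theorem \ref{thm:EC}, the DFA has a unique essential class, the cycles carrying the periods of $y$ and $z$ lie in the same strongly connected component. Hence there are finite connecting words $C_{yz}$ and $C_{zy}$ realising a walk from the $y$-cycle to the $z$-cycle and back. The positive row property guarantees that the matrices attached to these connectors, and any partial product, never annihilate a strictly positive row vector; together with the strict positivity of $M_y, M_z$ this keeps all matrix norms bounded away from $0$ and comparable to the relevant products of spectral radii, so that no cancellation occurs.

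Next, fix a target $t$ in the closed interval with endpoints $d_y, d_z$, and choose $\beta \in [0,1]$ with $t = \beta d_y + (1-\beta) d_z$; this is possible because $\beta \mapsto \beta d_y + (1-\beta)d_z$ is continuous and carries $[0,1]$ onto that interval. I would then build an essential point
\[ x = B_y^{\,n_1}\, C_{yz}\, B_z^{\,m_1}\, C_{zy}\, B_y^{\,n_2}\, C_{yz}\, B_z^{\,m_2}\, C_{zy}\cdots, \]
where $B_y, B_z$ are the period blocks of $y$ and $z$, and the exponents are chosen so that (i) the proportion $\beta_n$ of $y$-period digits among the first $n$ digits tends to $\beta$, and (ii) each block is asymptotically negligible relative to the cumulative length read so far (for instance $n_i, m_i$ growing linearly, so the last block is an $O(1/i)$ fraction of the total). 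Since $x$ remains in the essential class it is truly essential. Along the reading of a run $B_y^{\,j}$ the log-norm increases by $j\log\rho(M_y)$ up to a bounded error, and likewise for $B_z$, while the connectors contribute $O(1)$ per pair of blocks, hence $o(n)$ in total; therefore
\[ \frac{\log\Vert v\,T_1\cdots T_n\Vert}{-n\log p} = \beta_n\, d_y + (1-\beta_n)\, d_z + o(1). \]

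The key step, and the main obstacle, is the existence of the limit, i.e. upper local dimension equal to lower, for the interpolating point: a naive alternation of long blocks makes $\beta_n$ oscillate and separates the two one-sided local dimensions. The resolution is the dual use of positivity, namely strict positivity of $M_y, M_z$ together with the positive row property, which removes cancellation so that log-norms are additive up to bounded error, combined with the slow balanced growth in (ii), which forces $\beta_n \to \beta$ rather than mere subsequential convergence, since within-block fluctuations of $\beta_n$ are bounded by (block length)/(cumulative length) $\to 0$. Thus $\dimlc\mu(x) = t$, and letting $t$ range over the interval produces a truly essential point for every value, proving the claim.
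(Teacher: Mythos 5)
Your proposal is correct and is essentially the argument the paper intends: the paper gives no self-contained proof here but explicitly defers to adapting Theorem 3.14 of \cite{HareHareNg18}, whose proof is exactly this block-interleaving construction --- concatenating long runs of the two period blocks joined by connectors inside the unique essential class, using strict positivity of the period matrices (plus the positive row property) to make log-norms additive up to bounded error, and slowly growing block lengths to force the frequency $\beta_n$ to converge so the local dimension genuinely exists and equals $\beta\,\dimlc\mu(y)+(1-\beta)\,\dimlc\mu(z)$. The only (harmless) omission is a finite lead-in word from the initial state to the $y$-cycle so the concatenated word is actually accepted by the DFA; otherwise your treatment, including the $p$-adic simplification that every point is the centre of its cylinder so no one-sided issues arise, matches the cited proof.
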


\begin{corollary}[Analogous to Corollary 3.15 of \cite{HareHareNg18}]
Let $\mu$ be a self-similar measure satisfying the positive row property. Let \[I = \inf\{\dimlc \mu(x)  : x \text{ interior essential, positive, periodic}\}\] and 
\[S = \sup\{\dimlc \mu(x) : x \text{ interior essential, positive, periodic}\}.\] Then
$\{\dimlc \mu(x): x \text{ essential}\} = [I, S]$. A similar statement holds for the lower and upper local dimensions.
\end{corollary}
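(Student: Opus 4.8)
The plan is to establish the equality by proving the two inclusions $\{\dimlc\mu(x):x\text{ essential}\}\subseteq[I,S]$ and $[I,S]\subseteq\{\dimlc\mu(x):x\text{ essential}\}$ separately, drawing the first from the analogue of Theorem~3.12 of \cite{HareHareNg18} and the second from the analogues of Theorems~3.13 and~3.14. Throughout I would carry out the argument for the lower local dimension; the argument for the upper local dimension is identical after replacing each of the three input theorems by its upper-local-dimension counterpart, and this yields the final sentence of the corollary concerning the lower and upper versions.

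For the inclusion $\{\dimlc\mu(x):x\text{ essential}\}\subseteq[I,S]$, I would invoke the analogue of Theorem~3.12: the lower local dimensions at essential, positive, periodic points are dense in the set of all lower local dimensions at essential points. Every such periodic point has lower local dimension lying in $[I,S]$ by the definition of $I$ and $S$ as an infimum and supremum (using that in the $p$-adic setting every point is the centre of its enclosing cylinder, so the ``interior'' qualifier imposes no extra restriction and the periodic points relevant to the density statement are exactly those over which $I$ and $S$ are taken). Thus the dense subset sits inside the closed interval $[I,S]$, and passing to closures traps the entire set of lower local dimensions at essential points inside $[I,S]$.

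For the reverse inclusion I would cover the open interval $(I,S)$ first and then attach the endpoints. By definition of $I$ and $S$ there are interior essential, positive, periodic points $y_n$ and $z_n$ with $\dimlc\mu(y_n)\to I$ and $\dimlc\mu(z_n)\to S$. Applying the analogue of Theorem~3.14 to each pair $(y_n,z_m)$ shows that the closed interval with endpoints $\dimlc\mu(y_n)$ and $\dimlc\mu(z_m)$ lies in the set of local dimensions at essential points; letting $n,m\to\infty$ these intervals exhaust $(I,S)$. To realise the endpoints, I would apply the analogue of Theorem~3.13 to the sequence $(y_n)$, chosen so that $\liminf\dimlc\mu(y_n)=\limsup\dimlc\mu(y_n)=I$, producing a genuine essential point $x$ with $\dimlc\mu(x)=I$; the endpoint $S$ is obtained symmetrically from $(z_n)$. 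Combining this with the covering of $(I,S)$ gives $[I,S]\subseteq\{\dimlc\mu(x):x\text{ essential}\}$, and the two inclusions together yield the claimed equality.

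The main obstacle I anticipate is the attainment of the endpoints: since $I$ and $S$ are merely an infimum and a supremum they need not be achieved at any periodic point, so they must be produced by the concatenation construction underlying the analogue of Theorem~3.13, and one must verify both that the point so built is itself essential (its path is eventually confined to the essential class) and that the limiting procedure forces its lower local dimension to equal the target value rather than only bounding it. A secondary point requiring care is reconciling the ``interior essential'' hypothesis defining $I,S$ with the plain ``essential'' points occurring in the density statement and the ``truly essential'' points of the analogue of Theorem~3.14; the $p$-adic centring remark should render this routine, but it must be stated explicitly so that the dense subset is genuinely contained in $[I,S]$ and the intervals supplied by Theorem~3.14 genuinely consist of local dimensions at essential points.
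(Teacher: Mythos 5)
Your derivation is correct and matches the intended argument: the paper gives no explicit proof, deferring to the proofs in \cite{HareHareNg18}, where Corollary~3.15 is obtained exactly as you do --- the density theorem (the 3.12 analogue) traps the set of local dimensions at essential points inside the closed interval $[I,S]$, the interval theorem (the 3.14 analogue) applied to approximating pairs of periodic points covers $(I,S)$, and the limit theorem (the 3.13 analogue) applied to sequences with $\limsup=\liminf$ attains the endpoints $I$ and $S$. Your two flagged subtleties (the vacuity of ``interior'' in the $p$-adic setting, since every point is the centre of its cylinder, and that ``truly essential'' points are in particular essential) are precisely the points one must note when adapting the real-line proof, so the proposal is essentially the same as the paper's route.
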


\begin{theorem}[Analogous to Theorem 3.18 of \cite{HareHareNg18}]
Let $\mu$ be a $p$-adic self-similar measure satisfying the positive row property. Then there exists a essential element $x$ with $\dimlc\mu(x) = \dimp (K)$.
\end{theorem}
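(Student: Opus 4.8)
The plan is to show that $d:=\dimp(K)$ lies in the interval $[I,S]$ of local dimensions attained at essential points, and then to invoke the preceding corollary (the analogue of Corollary~3.15 of \cite{HareHareNg18}), which asserts that the set of local dimensions of $\mu$ at essential points is exactly $[I,S]$. Let $[I,S]$ denote this interval; in particular every essential point satisfies $I\le\underline{\dimlc}\,\mu(x)\le\overline{\dimlc}\,\mu(x)\le S$, and every value in $[I,S]$ is realised as a genuine local dimension at some essential point. Once $d\in[I,S]$ is established, that corollary immediately produces an essential $x$ with $\dimlc\mu(x)=d$, which is the desired conclusion.

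First I would record the two facts about the essential set $E:=K\setminus K'$ that drive the argument. By Theorem~\ref{thm:dimEC}\eqref{part:smaller} we have $\dimp(K')<\dimp(K)$, and since Hausdorff dimension is finitely stable, $\dimp(K)=\max\{\dimp(E),\dimp(K')\}$ forces $\dimp(E)=d$. By the preceding proposition we have $\mu(K')=0$, hence $\mu(E)=\mu(K)=1>0$. Thus $E$ is a set of full $\mu$-measure whose Hausdorff dimension is the full value $d$.

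The core of the proof is a pair of estimates coming from the $p$-adic mass distribution principle (Billingsley's lemma). In the ultrametric setting this is cleaner than over $\R$, because every ball $B_\epsilon(x)$ is a cylinder $a+p^k\Z_p$ and two cylinders of a given level are either equal or disjoint, so no Vitali-type covering issues arise. The two inequalities I need are the standard ones: (i) if $\underline{\dimlc}\,\mu(x)\le S$ for every $x\in E$, then $\dimp(E)\le S$; and (ii) if $\underline{\dimlc}\,\mu(x)\ge I$ for every $x\in E$ and $\mu(E)>0$, then $\dimp(E)\ge I$. Applying (i) gives $S\ge\dimp(E)=d$, and applying (ii), using $\mu(E)=1>0$, gives $I\le\dimp(E)=d$. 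Hence $I\le d\le S$, i.e. $d\in[I,S]$, and the preceding corollary then furnishes an essential point $x$ with $\dimlc\mu(x)=d$.

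The main obstacle I anticipate is the upper estimate $S\ge d$ in (i): it requires turning a cover of $E$ by the cylinders that realise the lower local dimensions into an efficient $\delta$-cover computing $H^{d}_{p}$, i.e. a careful verification of the $p$-adic Billingsley lemma for $\underline{\dimlc}$. The lower estimate (ii) is comparatively routine (it follows the usual mass distribution principle once one has $\mu(E)>0$), and the identification $\dimp(E)=d$ is immediate from Theorem~\ref{thm:dimEC}\eqref{part:smaller}. It is worth emphasising that $d$ is obtained as an interior, not extremal, value of $[I,S]$: for biased measures the largest local dimension $S$ typically strictly exceeds $\dimp(K)$, so the argument genuinely needs to sandwich $d$ between $I$ and $S$ rather than identify it with either endpoint.
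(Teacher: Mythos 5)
Your proposal is correct and takes essentially the approach the paper intends: the paper offers no separate argument for this theorem, deferring to an adaptation of \cite{HareHareNg18}, where the corresponding Theorem 3.18 is likewise proved by identifying $\dimp(K)$ with the dimension of the essential set (via $\mu(K')=0$ and $\dimp(K')<\dimp(K)$ from Theorem \ref{thm:dimEC}), sandwiching that value in $[I,S]$ with the two Billingsley/mass-distribution estimates on lower local dimensions, and then invoking the Corollary 3.15 analogue to realise it as an actual local dimension at an essential point. Your observation that the upper Billingsley estimate is the delicate step, and that the ultrametric makes it clean because intersecting $p$-adic balls are nested (so disjoint subcovers come for free), is exactly right.
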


It is likely that many other results from self-similar measures will also carry over in a similar or obvious way to $p$-adic self-similar measures.

\section{Open questions \& Comments}
\label{sec:conclusions}

In this paper we demonstrated that $p$-adic self-similar fractals are in fact $p$-adic path set fractal.  
These self-similar sets are all recognizable by a finite state automaton.
We showed that the associated DFA has a unique essential class. 

We gave examples where the contraction factor was $-p^{k_i}$ for some $k_i \geq 1$.  
It should be possible to extend these types of results to algebraic extensions of the $p$-adics.

We next studied decimation.
We gave conditions for when the decimation of a self-similar set has ``maximal'' dimension.
We also gave examples of more general self-similar sets when this was not in fact true.
It would be interesting to explore this more fully.

It was shown in \cite{AbramLagarias14a} that the set of $p$-adic path set fractals is closed under the process of decimation.  As $p$-adic self-similar sets are $p$-adic path set fractals, it is clear that the decimation of a $p$-adic self-similar set is a $p$-adic path set fractal.  It would be interesting to know under what conditions the decimation of a $p$-adic self-similar set is a $p$-adic self-similar set.

We used the self-similar fractals as a basis for creating $p$-adic self-similar measures.  This follows a long history of self-similar measures in $\R$.  We explored local dimension, and showed that it is in fact 
easier to compute in this setting, with fewer complications.
There are a number of questions about self-similar measures that have not been explored, but could lead to interesting results.  The most obvious of which is an exploration of $L^q$-spectrum for $p$-adic self-similar measures.
See for example \cite{LN}.


\end{document}